\newcommand{\om}{\ensuremath{\omega}}
\newcommand{\set}[2]{\ensuremath{\{#1 \hspace{0.3mm} \mid \hspace{0.3mm} #2\}}}
\newcommand{\del}{\ensuremath{\Delta^1_1}}
\newcommand{\ca}[1]{\ensuremath{\mathcal{#1}}}
\newcommand{\pii}{\ensuremath{\Pi^1_1}}
\newcommand{\sig}{\ensuremath{\Sigma^1_1}}
\newcommand\tboldsymbol[1]{%
\protect\raisebox{0pt}[0pt][0pt]{%
$\underset{\widetilde{}}{\boldsymbol{#1}}$}\mbox{\hskip 1pt}}
\newcommand{\ep}{\ensuremath{\varepsilon}}
\newcommand{\lh}{{\rm lh}}
\newcommand{\ck}{\ensuremath{\om_1^{CK}}}
\newcommand{\ckr}[1]{\ensuremath{\om_1^{#1}}}
\newcommand{\hleq}{\ensuremath{\leq_{\rm h}}}
\newcommand{\heq}{\ensuremath{=_{\rm h}}}
\newcommand{\hless}{\ensuremath{<_{\rm h}}}
\newcommand{\rfn}[1]{\ensuremath{\{#1\}}}
\newcommand{\tu}[1]{\textup{#1}}
\newcommand{\bolds}{\ensuremath{\tboldsymbol{\Sigma}}}
\newcommand{\boldp}{\ensuremath{\tboldsymbol{\Pi}}}
\newcommand{\cn}[2]{\ensuremath{#1 \ast #2}}
\newcommand{\baire}{\ensuremath{{\mathcal{N}}}}
\newcommand{\cantor}{{\ensuremath{^\om2}}}
\newcommand{\pr}{{\rm pr}}
\newcommand{\pair}[1]{\langle #1 \rangle}
\newcommand{\myemph}[1]{{\bf \emph{#1}}}
\newcommand{\fix}{\text{FIX}}
\newcommand{\HYP}{\textit{HYP}}
\renewcommand{\bold}[1]{\bm{#1}}
\newcommand{\ie}{\text{i.e.,}~}
\newcommand{\hcode}{I}
\newcommand{\hsett}{{\rm Hp}}
\newcommand{\hset}[1]{\hsett(#1)}
\newcommand{\normI}[1]{|#1|_{\hcode}}
\newcommand{\KP}{\textsf{KP}}
\newtheorem{theorem}{Theorem}
\newtheorem{lemma}[theorem]{Lemma}
\newtheorem{definition}[theorem]{Definition}
\newtheorem{proposition}[theorem]{Proposition}
\newtheorem{corollary}[theorem]{Corollary}
\newtheorem{question}[theorem]{Question}
\newtheorem{remark}[theorem]{Remark}
\newtheorem{claim}[theorem]{Claim}
\newtheorem{conjecture}[theorem]{Conjecture}
\begin{document}

\title{On a Question of J\"{a}gers}

\author{Vassilios Gregoriades}

\date{\today}

\keywords{positive formula, fixed point, hyperarithmetical point, Suslin-Kleene theorem, uniformization}

\subjclass[2010]{03D60, 03E15}

\thanks{Special thanks are owed to {\bf Yiannis Moschovakis} and {\bf Gerhard J\"{a}ger} for valuable discussions.}

\maketitle

\newcounter{mycount}
\newcounter{mycountb}

\begin{abstract}
We show that there exists a positive arithmetical formula $\psi(x,R)$, where $x \in \om$, $R \subseteq \om$, with no hyperarithmetical fixed point. This answers a question of Gerhard J\"{a}ger. As corollaries we obtain results on the proof-theoretic strength of the Kripke-Platek set theory; the fixed points of monotone functions in chain-complete partial orders; the non-Borel uniformization of Borel sets; and the hyperdegrees of fixed points of positive formulae. Further we prove a Suslin-Kleene type result for the specific encoding of the hyperarithmetical sets that we are using.
\end{abstract}

\section{Introduction}

We begin with some comments on \myemph{notation}. We identify the natural numbers with the first infinite ordinal $\om$ and we denote by $\rfn{e}^n$ the $e$-th largest partial recursive function on $\om^n$ to $\om$, where $n > 0$. In fact we will omit the preceding $n$ and write simply $\rfn{e}$ since the domain of the latter will be clear from the context - further  the text will make it clear when the symbol $\{e\}$ refers to the set which contains only $e$.  When we use the term ``$\rfn{e}(k)$" without explicit comment on the domain of $\rfn{e}$ we always mean that $\rfn{e}$ is defined on $k \in \om$\;; for example $\rfn{e}(k) \in A \subseteq \om$ means that $\rfn{e}$ is defined on $k$ and its value at $k$ is a member of $A$.

The set of all finite sequences of natural numbers is denoted by $\om^{< \om}$. We include in $\om^{<\om}$ the empty sequence as well. A typical element of $\om^{< \om}$ will be denoted by $(u_0,\dots,u_{n-1})$, by $n=0$ we mean the empty sequence. Given $u \in \om^{<\om}$ the unique $n$ for which $u = (u_0,\dots,u_{n-1})$ is the \myemph{length} of $u$ and is denoted by $\lh(u)$.

We fix the injective function $\pair{\cdot}: \om^{<\om} \to \om: (u_0,\dots,u_{n-1}) \mapsto p_0^{u_0+1}\cdot \dots \cdot p_{n-1}^{u_{n-1}+1}$, where $p_0 < p_1 < \dots$ is the increasing enumeration of all prime numbers; the empty sequence is mapped by $\pair{\cdot}$ to $1$. If $s = \pair{u_0,\dots,u_{n-1}}$ we say that $s$ \myemph{encodes} $u = (u_0,\dots,u_{n-1})$. We will often view $\om^{< \om}$ as a domain for recursive functions; the latter makes sense via $\pair{\cdot}$. For example a subset $P$ of $\om^{<\om}$ is semirecursive if and only if the set of all $s \in \om$ for which $s = \pair{u_0,\dots,u_{n-1}}$ and $(u_0,\dots,u_{n-1}) \in P$ is semirecursive.

If $X$ and $Y$ are non-empty sets, $P \subseteq X \times Y$, and $x \in X$, we denote by $P(x)$ the $x$-section $\set{y \in P}{(x,y) \in P}$.

The \myemph{Baire space} $\baire$ is the space $^\om\om$ of all sequences of natural numbers together with the product topology and the \myemph{Cantor space} is $\cantor$, where the sequences take values in $2 = \{0,1\}$.

We recall the class \HYP of all \myemph{hyperarithmetical} subsets of $\om$. As it is well-known a set $A \subseteq \om$ is hyperarithmetical exactly when it appears in the constructive hierarchy at a stage below the first non-recursive ordinal $\bold{\ck}$. It is also a well-known fact in effective descriptive set theory that the class \HYP coincides with the class \del \ of all \myemph{effective bi-analytic} sets. The latter is immediate from the Souslin-Kleene Theorem see \cite[7B.4]{yiannis_dst}.

Given $\alpha, \beta \in \baire$ we say that $\bold{\alpha}$ \myemph{is hyperarithmetical in} $\bold{\beta}$ if the set
\[
\set{~\pair{k_0,\dots,k_{n-1}} \in \om}{(\forall i < n)[\alpha(i)=k_i]~}
\]
is a $\HYP(\beta)$ subset of $\om$. The latter is equivalent to saying the singleton $\{\alpha\}$ is a $\del(\beta)$ set. We define 
\begin{align*}
\alpha \hleq \beta \iff& \ \text{$\alpha$ is hyperarithmetical in $\beta$}\\
\alpha \heq \beta \iff& \ \alpha \hleq \beta \ \& \ \beta \hleq \alpha\\
\alpha \hless \beta \iff& \ \alpha \hleq \beta \ \& \ \beta \not \hleq \alpha. 
\end{align*}
The \myemph{hyperdegree of} $\bold{\alpha}$ is the set $\set{\beta}{\alpha \heq \beta}$. We fix in the sequel a subset of the naturals that is $\pii$ and not $\del$, for example \myemph{Kleene's} $\bold{O}$. When we write $O \hleq \alpha$ we mean that the characteristic function $\chi_{O} \in \baire$ of $O$ is $\hleq$-below $\alpha$.

We consider the language $\ca{L}$ of first-order arithmetic and a new unary relation symbol $\tilde{R}$. In the sequel we denote by $\ca{L}(\tilde{R})$ the language obtained by $\ca{L}$ and $\tilde{R}$. We recall that a formula $\psi(x_1,\dots,x_n,\tilde{R})$ in $\ca{L}(\tilde{R})$ is \myemph{positive} in $\tilde{R}$ or simply positive if $\tilde{R}$ does not appear in $\psi$; or it has one of the following forms: $0 \in \tilde{R}$, $ 1 \in \tilde{R}$, $x_i \in \tilde{R}$, $(x_i+1) \in \tilde{R}$, $x_i+x_j \in \tilde{R}$, $x_i  \cdot x_j \in \tilde{R}$, $\varphi \vee \chi$, $\varphi~\&~\chi$,  $\exists x_{n+1}\varphi(x_1,\dots,x_n,x_{n+1},\tilde{R})$, $\forall x_{n+1}\varphi(x_1,\dots,x_n,x_{n+1},\tilde{R})$, where $\varphi$ and $\chi$ are positive.

Evidently a formula $\psi(x,\tilde{R})$ in $\ca{L}(\tilde{R})$ induces the operation 
\[
\Phi_\psi: \ca{P}(\om) \to \ca{P}(\om): A \mapsto \set{y}{\psi(y,A) \ \text{holds}}
\]
(where $\ca{P}(\om)$ is the powerset of $\om$) and if $\psi$ is positive it is easy to see that $\Phi_\psi$ is \textbf{monotone}, \ie if $A \subseteq B \subseteq \om$ then $\Phi_\psi(A) \subseteq \Phi_\psi(B)$. A \textbf{fixed point} of $\psi$ is a set $Q \subseteq \om$ such that for all $y \in \om$ we have
\[
y \in Q \iff \psi(y,Q) \ \text{holds},
\]
equivalently $Q$ is a fixed point of the associated operation $\Phi_\psi$. As it is well-known a monotone operation $\Phi$ has a fixed point (see for example \cite[7C]{yiannis_dst}).

The topic of fixed points in the Kripke-Platek set theory  or in weak fragments of second-order arithmetic has received attention from Gerhard J\"{a}ger and Silvia Steila, see \cite{ jaeger_steila_2018} and \cite{jaeger_2021}. The following question of Gerhard J\"{a}ger was communicated to us through Yiannis Moschovakis: does there exist a positive formula in $\psi(x,\tilde{R})$ in $\ca{L}(\tilde{R})$, which does not have a fixed point that lies in the constructive hierarchy up to the $\ck$-level? In Theorem \ref{theorem positive formula with no HYP fixed point} we show that the answer to this question is affirmative.

\begin{theorem}
\label{theorem positive formula with no HYP fixed point}
Consider the formula of $\ca{L}(\tilde{R})$ defined by
\begin{align*}
\psi(y,\tilde{R}) 
\equiv& \ (\exists a,x,e)\big\{y = \pair{a,x} \ \& \ \big( \ [a = \pair{0,e} \ \& \ x = e ]\\
& \hspace*{20mm} \vee [a = \pair{1,e} \ \& \ (\exists t)(\forall s)[\pair{\rfn{e}(\pair{t,s}),x} \in \tilde{R}] \ \big)\big\}.
\end{align*}
Then $\psi$ is positive and has no hyperarithmetical fixed points, \ie no fixed point of $\psi$ belongs to $L_{\ck}$.
\end{theorem}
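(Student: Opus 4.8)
The positivity of $\psi$ is immediate from the grammar of positive formulae: the symbol $\tilde R$ occurs only inside the single atom $\pair{\rfn{e}(\pair{t,s}),x} \in \tilde R$, and this atom sits underneath the connectives $\&,\vee$ and the quantifiers $\exists t,\forall s$, with no negation anywhere; hence $\psi$ is positive and $\Phi_\psi$ is monotone. I would then fix notation for the least fixed point $I$ and the greatest fixed point $J$ of $\Phi_\psi$, recalling that a positive arithmetical operator has least fixed point in \pii\ and greatest fixed point in \sig, and that every fixed point $Q$ is sandwiched as $I\subseteq Q\subseteq J$. The goal is to show $Q\notin\del$ for every such $Q$, which by $\del=\HYP$ is exactly the assertion that no fixed point lies in $L_{\ck}$.

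Next I would analyse $\psi$ section by section in the parameter $x$. The base clause forces $\pair{\pair{0,e},x}\in Q\iff e=x$, so in the section over $x$ there is exactly one ``true leaf'' $\pair{0,x}$ while every other $\pair{0,e}$ is a forced ``false leaf,'' uniformly and independently of $Q$. The recursive clause turns each $\pair{1,e}$ into an $\exists t\,\forall s$ node whose children are $\pair{\rfn{e}(\pair{t,s}),x}$. Reading least-fixed-point membership as the existence of a well-founded justification that bottoms out at the true leaf, I would invoke the Kleene recursion theorem to attach to each recursive well-ordering (equivalently, each ordinal below \ck) an index $e$ for which $\pair{1,e}$ enters $I$ precisely at the corresponding stage, the $\forall s$ quantifier collecting predecessors at limits and the base clause supplying the zero case. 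This makes the inductive norm on $I$ cofinal in \ck, so that $I$ is \pii-complete and in particular $I\notin\del$.

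The real difficulty is that $\Phi_\psi$ has no unique fixed point: a recursion-theorem self-loop with $\rfn{e}(z)=\pair{1,e}$ yields an element $\pair{1,e}$ lying strictly between $I$ and $J$, so the \pii-completeness of $I$ alone cannot bound the complexity of an arbitrary fixed point $Q$. Moreover no uniform reduction can push the ``false'' instances out of $J$, since ``out of $J$'' is \pii\ whereas the false instances of a \pii-complete set are \sig-complete. The plan is therefore to argue by contradiction through boundedness: assuming some fixed point $Q$ were hyperarithmetic, I would use the fixed-point identity $y\in Q\iff\psi(y,Q)$ together with $Q\in\del$ to extract, for each element forced into $I$ by the construction above, a witness $t$ by unbounded search — the predicate ``$\forall s[\,\pair{\rfn{e}(\pair{t,s}),x}\in Q\,]$'' being $\Pi^0_1(Q)$ and hence \del — thereby reading off a \del\ assignment of justifications. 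The aim is to convert the \pii\ inductive norm into a \del-bounded one, contradicting \sig-boundedness and the cofinality of the norm in \ck.

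I expect the main obstacle to be exactly the gap between $I$ and $J$: a \del\ witness-search guided only by membership in $Q$ may descend into children that lie in the gap and never terminate in a well-founded tree, so the naive \del\ justification need not be well-founded. Overcoming this requires designing the indices $e$ so that along every $Q$-consistent choice of witnesses the recursion is driven back into the forced region, using the asymmetry of a single true leaf per section to make every non-terminating branch detectable and re-routable to a forced false leaf. For the \emph{specific} computations arising from the coding one then wants well-foundedness of the $Q$-justification to be equivalent to membership in the \pii-complete set, and hence decided hyperarithmetically once $Q$ is. Establishing this gap-avoidance for the coded instances — so that any hyperarithmetic fixed point produces a \del\ wellordering cofinal in \ck — is the heart of the argument, and the desired conclusion follows immediately from the resulting contradiction.
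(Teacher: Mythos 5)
Your proposal correctly isolates the central difficulty --- an arbitrary fixed point $Q$ is only sandwiched between the least and the greatest fixed point, so the $\pii$-completeness of the least fixed point by itself says nothing about $Q$ --- but it stops exactly where the real work begins: you explicitly defer ``establishing this gap-avoidance for the coded instances'' as ``the heart of the argument,'' and that deferred step \emph{is} the theorem. Two concrete ingredients are missing. First, the mechanism of gap-avoidance. The paper defines a $\pii$ set of codes $\hcode$ (the least fixed point of an auxiliary positive formula) carrying an ordinal norm $\normI{a}$ such that for every code $a = \pair{1,e}$, \emph{every} value $\rfn{e}(k)$ is again a code of strictly smaller norm. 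Consequently, on pairs $\pair{a,x}$ with $a \in \hcode$ the recursion can never descend into the gap no matter which witnesses $Q$ supplies, and a direct induction on $\normI{a}$ (Lemma \ref{lemma reduction identity}) shows that $\pair{a,x} \in Q \iff x \in \hset{a}$ for \emph{every} fixed point $Q$: the forced region is forced outright, with no need for a $Q$-guided witness search, no re-routing device, and no appeal to $Q \in \del$ at this stage. Your worry about a $\del$ search looping on a self-referential index is real for non-codes, but the point is that non-codes never arise as children of codes, and only coded instances are ever used.

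Second, and harder, one must show that the coded sections $\hset{a}$, $a \in \hcode$, exhaust $\HYP$ (Lemma \ref{lemma provides encoding of hyp}); the paper proves this by showing $\set{\hset{a}}{a \in \hcode}$ is an effective Borel $\sigma$-algebra and invoking Kleene's theorem that $\del$ is the least one, the uniform closure under complements being a delicate double application of the recursion theorem. Your sketch replaces this with the unproven assertion that the recursion theorem attaches to each recursive well-ordering an index making the inductive norm cofinal in $\ck$, and then routes the contradiction through $\Sigma^1_1$-boundedness applied to a $\del$ assignment of justifications --- precisely the object whose well-foundedness you concede you cannot yet establish. The paper's endgame is instead an elementary diagonalization: if $Q$ were hyperarithmetical, then $P = \set{x}{(\exists a,k)[x = \pair{a,k} \ \& \ \pair{a,x} \notin Q]}$ would be too, hence $P = \hset{a^\ast}$ for some code $a^\ast$, and Lemma \ref{lemma reduction identity} at $\pair{a^\ast,0}$ gives an immediate contradiction. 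So while your high-level diagnosis points in the right direction, the proposal as written has a genuine gap at both of the load-bearing steps, and the boundedness detour it proposes is not the route that closes them.
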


\begin{remark}\normalfont
\label{remark following main theorem}
We note that our original version of the preceding result is that the positive formula
\[
\psi'(e,R) \iff \ [e = 1] \ \vee \ [e \neq 1 \ \& \ (\exists n)(\forall m)[\rfn{e}(\pair{n,m}) \in R]]
\]
has no hyperarithmetical fixed points. 

The employment of the formula $\psi$ in favor of $\psi'$ and the subsequent modification of the original proof was suggested to us by Yiannis Moschovakis. While $\psi'$ is slightly simpler to define, we think that $\psi$ provides a shorter and somewhat more elegant proof.

We remark moreover that $\psi$ is $\pii$ on $\pii$ and $\sig$ on $\sig$ as well; the latter means that if $\Gamma$ is on of $\sig$ or $\pii$ then for all sets $P \subseteq \baire \times \baire$ in $\Gamma$ the set $\set{(y,\alpha) \in \om \times \baire}{\psi(y,P(\alpha)) \ \text{holds}}$ is in $\Gamma$ as well. It follows from the Norm Induction Theorem (see \cite[7C.8]{yiannis_dst}) that its least fixed point is a $\pii$ set and its greatest a $\sig$ one. Therefore from Theorem \ref{theorem positive formula with no HYP fixed point} these fixed points are proper $\pii$ and $\sig$ sets.
\end{remark}

In the sequel we investigate some of the consequences of the preceding result.
\smallskip

\myemph{Fixed points in chain-complete partial orders.} Obviously we can identify a subset of $\om$ with a member of $\cantor$ and vice versa. We transfer the subset relation to members of $\cantor$ the usual way,
\[
\alpha \subseteq \beta \iff (\forall n)[\alpha(n) = 1 \ \longrightarrow \ \beta(n) = 1].
\]
A function $f: \cantor \to \cantor$ is \myemph{monotone} if for all $\alpha \subseteq \beta$ we have $f(\alpha) \subseteq f(\beta)$. Obviously the partially ordered space $(\cantor,\subseteq)$ is \myemph{chain-complete}, \ie every chain has a supremum, and as it is well-known every monotone function in a complete partially ordered space has a fixed point. It is natural to ask if there are arithmetical monotone functions $f: \cantor \to \cantor$ without hyperaritmetical fixed points. The answer is provided by Theorem \ref{theorem positive formula with no HYP fixed point}.

\begin{corollary}
\label{corollary monotone arithmetical function with no HYP fixed point}
There exists a $\Sigma^0_4$-recursive monotone function $f: (\cantor,\subseteq) \to (\cantor,\subseteq)$ with no hyperarithmetical fixed points.
\end{corollary}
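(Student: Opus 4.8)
The plan is to let $f$ be, in essence, the operation $\Phi_\psi$ from Theorem~\ref{theorem positive formula with no HYP fixed point} transported to $\cantor$ through the identification of a subset of $\om$ with its characteristic function. Concretely, I would set $f(\alpha)$ to be the characteristic function of $\Phi_\psi(A_\alpha)$, where $A_\alpha = \set{n}{\alpha(n) = 1}$; equivalently $f(\alpha)(y) = 1 \iff \psi(y, A_\alpha)$ holds. This is a total function $\cantor \to \cantor$ since $\Phi_\psi(A_\alpha)$ is always a subset of $\om$. Because $\psi$ is positive, $\Phi_\psi$ is monotone with respect to inclusion on $\ca{P}(\om)$, which is exactly the relation $\subseteq$ on $\cantor$ under the identification, so $f$ is monotone. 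Moreover $\alpha$ is a fixed point of $f$ precisely when $A_\alpha$ is a fixed point of $\psi$, and $\alpha$ is hyperarithmetical iff $A_\alpha$ is; hence a hyperarithmetical fixed point of $f$ would yield a hyperarithmetical fixed point of $\psi$, contradicting the Theorem, so $f$ has none.

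It remains only to verify that $f$ is $\Sigma^0_4$-recursive, which is the single step requiring computation. For this I would first bound the complexity of the relation $P(y,\alpha) \iff \psi(y, A_\alpha)$. The innermost matrix $\pair{\rfn{e}(\pair{t,s}), x} \in \tilde{R}$ abbreviates $(\exists v)[\rfn{e}(\pair{t,s}) \downarrow = v \ \& \ \alpha(\pair{v,x}) = 1]$, which is $\Sigma^0_1$ in $\alpha$, since the graph of the universal partial recursive function is recursively enumerable and membership in $A_\alpha$ is recursive in $\alpha$. Prefixing $(\forall s)$ raises the matrix to $\Pi^0_2$, and prefixing $(\exists t)$ to $\Sigma^0_3$; the remaining recursive conjuncts, the disjunction with the recursive condition $[a = \pair{0,e} \ \& \ x = e]$, and the block $(\exists a,x,e)$ of number quantifiers do not increase the level. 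Thus $P$ is $\Sigma^0_3$.

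Finally I would pass from $P$ to the recursiveness of $f$. Since $f(\alpha)(y) = 1 \iff P(y,\alpha)$ is $\Sigma^0_3$ while $f(\alpha)(y) = 0 \iff \neg P(y,\alpha)$ is $\Pi^0_3$, the neighborhood relation $f(\alpha) \in N_u$, \ie $u \subset f(\alpha)$ for a binary string $u$, is a finite conjunction of $\Sigma^0_3$ and $\Pi^0_3$ conditions, hence $\Sigma^0_4$. This is exactly the assertion that $f$ is $\Sigma^0_4$-recursive. I do not expect a genuine obstacle here: all the mathematical content resides in the Theorem, and the corollary is a transcription together with a routine quantifier count. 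The only subtle point is that the mixing of the $\Sigma^0_3$ condition ``the $y$-th bit is $1$'' with the $\Pi^0_3$ condition ``the $y$-th bit is $0$'' costs one quantifier and lands us in $\Sigma^0_4$ rather than $\Sigma^0_3$, which is precisely the bound claimed.
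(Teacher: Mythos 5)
Your argument is correct and follows the same route as the paper: identify $f$ with $\Phi_\psi$ via characteristic functions, use positivity for monotonicity, transfer hyperarithmetical fixed points back to $\psi$, and observe that $\psi$ is a $\Sigma_3$ formula so that the neighborhood relation of $f$ is $\Sigma^0_4$. The paper states the last step in one line, whereas you spell out the quantifier count; the content is the same.
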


\begin{proof}
Consider the formula $\psi(x,\tilde{R})$ in Theorem \ref{theorem positive formula with no HYP fixed point}. Clearly the associated operation $\Phi_\psi$ can be identified with the function 
\[
f_\psi: \cantor \to \cantor: f_\psi(\alpha)(m) = 1 \iff \psi(m,A) \ \text{holds},
\]
where $A = \set{n}{\alpha(n)=1}$. Since $\psi$ is positive, the function $f_\psi$ is monotone. Moreover for every fixed point $\alpha$ of $f_\psi$ the set $A = \set{n}{\alpha(n)=1}$ is a fixed point of $\psi$. Hence $f_\psi$ has no hyperarithmetical fixed points. Finally we remark that $\psi$ is a $\Sigma_3$ formula, which implies that $f$ is $\Sigma^0_4$-recursive. 
\end{proof}\smallskip

\myemph{Unprovability in \KP.} As it was communicated to us by G. J\"{a}ger, it was known to him that the Kripke-Platek set theory (\KP) with infinity cannot prove that an arithmetical positive formula {\em with parameters} has fixed points. It is immediate from Theorem \ref{theorem positive formula with no HYP fixed point} that the parameter-free version of the preceding result is also true:

\begin{corollary}
\label{corollary Kripke-Patek for formulae}
There is a positive \tu{(}parameter-free\tu{)} formula $\psi(x,\tilde{R})$ of $\ca{L}(\tilde{R})$ such that
\[
\textup{\KP} + (\textup{Axiom of Infinity}) \ \not \vdash \ \textup{($\psi$ has a fixed point)}.
\]
\end{corollary}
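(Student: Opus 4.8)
The plan is a soundness argument built around the least admissible set above $\om$, namely $L_{\ck}$, which is the canonical model witnessing that \KP\ cannot guarantee the existence of hyperarithmetical objects. Take for $\psi(x,\tilde{R})$ the positive formula produced by Theorem \ref{theorem positive formula with no HYP fixed point}. First I would express the assertion ``$\psi$ has a fixed point'' as a sentence $\sigma$ in the language of set theory: since \KP\ proves $\Sigma$-recursion, the arithmetical operations and the Kleene bracket $\rfn{e}$ admit canonical definitions there, yielding a set-theoretic translation $\psi^{*}(y,Q)$ of the matrix of $\psi$, and I set
\[
\sigma \ \equiv \ (\exists Q)\big[\, Q \subseteq \om \ \& \ (\forall y \in \om)(y \in Q \leftrightarrow \psi^{*}(y,Q)) \,\big].
\]
By construction a model of \KP\ satisfies $\sigma$ exactly when it contains a subset of $\om$ that is a fixed point of $\Phi_\psi$.

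Next I would bring in two classical facts. The first is that $L_{\ck}$ is admissible and contains $\om$, whence $L_{\ck} \models \KP + (\text{Axiom of Infinity})$. The second is the identification of subsets of $\om$ with stages of the constructive hierarchy already recalled in the introduction: the sets in $\ca{P}(\om) \cap L_{\ck}$ are exactly the members of \HYP.

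The argument then proceeds by contradiction. Assume $\KP + (\text{Axiom of Infinity}) \vdash \sigma$. By soundness $L_{\ck} \models \sigma$, so there is $Q \in L_{\ck}$ with $Q \subseteq \om$ and $L_{\ck} \models (\forall y \in \om)(y \in Q \leftrightarrow \psi^{*}(y,Q))$. Because $L_{\ck}$ is transitive we have $\om^{L_{\ck}} = \om$, and every quantifier of the arithmetical matrix of $\psi$ ranges over $\om$; hence evaluating $\psi^{*}(\cdot,Q)$ inside $L_{\ck}$ gives the same truth values as in $V$, and $Q$ is a genuine fixed point of $\psi$. But $Q$ lies in $\ca{P}(\om) \cap L_{\ck}$, which coincides with \HYP, so $Q$ is a hyperarithmetical fixed point of $\psi$, contradicting Theorem \ref{theorem positive formula with no HYP fixed point}. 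Therefore $\KP + (\text{Axiom of Infinity}) \not\vdash \sigma$, which is exactly the statement.

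The only delicate point I anticipate is the absoluteness step: one must be sure that the set-theoretic rendering $\psi^{*}$ is faithful, \ie that evaluating it at a parameter $Q \in L_{\ck}$ agrees with the arithmetical truth of $\psi(\cdot,Q)$ in $V$. This reduces to the absoluteness of arithmetic over the standard $\om$ together with the fact that the computation predicate ``$\rfn{e}(n)=m$'' is $\Sigma^0_1$; both hold because $\om^{L_{\ck}} = \om$, so no appeal to the fine structure of $L_{\ck}$ beyond its admissibility and the standard value of $\ca{P}(\om) \cap L_{\ck}$ is required.
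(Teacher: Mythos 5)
Your argument is correct and is exactly the route the paper intends: the paper dismisses this corollary as ``immediate from Theorem \ref{theorem positive formula with no HYP fixed point}'' precisely because $L_{\ck}$ is a model of $\KP$ plus Infinity whose subsets of $\om$ are exactly the hyperarithmetical sets, so it witnesses the unprovability. You have simply spelled out the soundness and absoluteness details that the paper leaves implicit.
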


\myemph{Relativization and uniformity}. As usual one can \emph{relativize} the preceding result to a parameter in $\cantor$ in a uniform way: we extend the language $\ca{L}(\tilde{R})$ to $\ca{L}(\tilde{R},\tilde{Q})$ by adding a new unary symbol $\tilde{Q}$ and we define $\tilde{\psi}(x,\tilde{R},\tilde{Q})$ by replacing the $e$-th partial recursive function $\rfn{e}$ in the definition of $\psi$ in Theorem \ref{theorem positive formula with no HYP fixed point} with  $\rfn{e}^{\chi_{\tilde{Q}}}$, where $\chi_{\tilde{Q}}$ stands for the characteristic function of $\tilde{Q}$. In other words the condition ``$\pair{\rfn{e}(\pair{t,s}),x} \in \tilde{R}$" becomes ``there exists an initial segment $u$ of the characteristic function of $Q$ such that $\pair{\rfn{e}(u,\pair{t,s}),x} \in \tilde{R}$. (Notice that here \rfn{e} refers to $\rfn{e}^2$, the $e$-th largest recursive partial function on $\om^2$.) The preceding condition is expressible in $\mathcal{L}(\tilde{R},\tilde{Q})$,
\begin{align*}
&(\exists p = \pair{u_0,\dots,u_{n-1}}) \big [ \ (\forall i < n)\big ( \ u_i = 0,1 \ \& \ [u_i = 1 \leftrightarrow i \in \tilde{Q}] \ \big )\\
& \hspace*{55mm} \ \& \ \pair{\rfn{e}(u,\pair{t,s}),x} \in \tilde{R} \big ].
\end{align*}
Evidently $\tilde{\psi}$ is positive in $\tilde{R}$ (but not in $\tilde{Q}$).

It will become apparent from the proof of Theorem \ref{theorem positive formula with no HYP fixed point} that for every set $C \subseteq \om$, the positive formula $\tilde{\psi}(x,\tilde{R},C)$ of $\ca{L}(\tilde{R})$ with constant $C$ has no $\HYP(C)$-fixed points. 

The relativized version of Corollary \ref{corollary monotone arithmetical function with no HYP fixed point} gives a $\Sigma^0_4$-recursive function $f: \cantor \times \cantor \to \cantor$ such that for all $\gamma \in \cantor$ the section $f_\gamma: (\cantor,\subseteq) \to (\cantor,\subseteq)$ is monotone and has no $\HYP(\gamma)$ fixed points. This in turn has an interesting application in classical descriptive set theory. 

Recall that a set $P^\ast$ \myemph{uniformizes} the set $P \subseteq \ca{X} \times \ca{Y}$, where $\ca{X}$, $\ca{Y}$ are Polish spaces, if $P^\ast \subseteq P$ and for all $x$ for which the section $P_x$ is non-empty there exists exactly one $y \in \ca{Y}$ with $(x,y) \in P^\ast$. It is a prominent question in descriptive set theory to ask whether a given set $P$ that belongs to some pointclass $\Gamma$ can be uniformized by a set $P^\ast$ that is in $\Gamma$ as well.
 
\begin{corollary}
\label{corollary non uniformization with artithmetical f}
There exists a function $f: \cantor \times \cantor \to \cantor$ with the following properties:
\begin{list}{\tu{(}\roman{mycount}\tu{)}}{\usecounter{mycount}}
\item each section $f_\gamma: (\cantor,\subseteq) \to (\cantor,\subseteq)$ is monotone and therefore it has a fixed point;
\item the function $f$ is $\bolds^0_4$-measurable;
\item there is no Borel-measurable function $u: \cantor \to \cantor$ such that $u(\gamma)$ is a fixed point of $f_\gamma$ for all $\gamma \in \cantor$.\smallskip

In particular the set
\[
P = \set{(\gamma,\alpha) \in \cantor \times \cantor}{f(\gamma,\alpha) = \alpha}
\]
is $\boldp^0_4$, has non-empty sections $P(\gamma)$ for all $\gamma \in \cantor$, and cannot be uniformized by any Borel set. 
\end{list}
\end{corollary}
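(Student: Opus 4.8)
The plan is to let $f$ be exactly the function produced by the relativized form of Corollary \ref{corollary monotone arithmetical function with no HYP fixed point}, recorded in the paragraph above: a $\Sigma^0_4$-recursive map $f\colon \cantor \times \cantor \to \cantor$ each of whose sections $f_\gamma$ is monotone and has no $\HYP(\gamma)$ fixed point. With this choice, clauses (i) and (ii) come essentially for free: monotonicity of $f_\gamma$ on the complete partial order $(\cantor,\subseteq)$ secures a fixed point by the standard fixed-point theorem for monotone maps already quoted, while $\Sigma^0_4$-recursiveness of $f$ is in particular $\bolds^0_4$-measurability.

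All the content sits in (iii), and the device I would use is a diagonalization against a Borel code. Assume toward a contradiction that $u\colon \cantor \to \cantor$ is Borel-measurable and that $u(\gamma)$ is a fixed point of $f_\gamma$ for every $\gamma$. Being Borel, the graph of $u$ is $\boldd^1_1$, hence $\del(\delta)$ for a single parameter $\delta \in \cantor$; since it is the graph of a total function, the usual $\exists/\forall$ rewriting gives $u(\gamma) \in \del(\gamma,\delta) = \HYP(\gamma,\delta)$ for every $\gamma$, uniformly. The crucial move is to evaluate at the single point $\gamma = \delta$: there the parameter is absorbed, so $u(\delta) \in \HYP(\delta,\delta) = \HYP(\delta)$, and yet $u(\delta)$ is by assumption a fixed point of $f_\delta$. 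This contradicts the fact that $f_\delta$ has no $\HYP(\delta)$ fixed point, which establishes (iii).

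For the final ``in particular'' clause I would first pin down the complexity of $P$. Writing $A_\alpha = \set{n}{\alpha(n)=1}$, the condition $(\gamma,\alpha) \in P$ unwinds to $(\forall m)[\,\alpha(m)=1 \leftrightarrow \tilde{\psi}(m, A_\alpha, \gamma)\,]$. The relation defined by $\tilde{\psi}$ is $\bolds^0_3$ in $(\gamma,\alpha,m)$ (the extra universal quantifiers introduced by the relativization are absorbed by the universal quantifier already present in $\psi$), so the implication $\alpha(m)=1 \to \tilde{\psi}$ is $\bolds^0_3$ and the implication $\tilde{\psi} \to \alpha(m)=1$ is $\boldp^0_3$, hence both are $\boldp^0_4$; closing under the conjunction and the outer $(\forall m)$ keeps $P$ in $\boldp^0_4$. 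The sections $P(\gamma)$ are precisely the fixed-point sets of the $f_\gamma$, and so are non-empty by (i). Finally, if a Borel set $P^\ast \subseteq P$ uniformized $P$, then, since every section is non-empty, $P^\ast$ would be the Borel graph of a total function $u\colon \cantor \to \cantor$ with $u(\gamma)$ a fixed point of $f_\gamma$; a function between Polish spaces with Borel graph is Borel-measurable, so this $u$ would contradict (iii). Hence $P$ admits no Borel uniformization.

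The step I expect to be the real obstacle is the self-referential substitution $\gamma = \delta$ in the proof of (iii). One has to be confident that Borel-measurability of $u$ genuinely yields one fixed real $\delta$ with $u(\gamma) \in \HYP(\gamma,\delta)$ uniformly in $\gamma$, and that the relativization defining $f$ is uniform enough that $f_\delta$ is exactly the section to which the ``no $\HYP(\delta)$ fixed point'' conclusion applies. Once those two points are secured, the complexity bookkeeping for $P$ and the passage from a uniformizing set to a Borel selector are routine.
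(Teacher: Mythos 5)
Your proposal is correct and follows essentially the same route as the paper: take $f$ from the relativized version of Corollary \ref{corollary monotone arithmetical function with no HYP fixed point}, note that a Borel-measurable selector $u$ would be $\del(\ep)$-recursive for some parameter $\ep$, and then evaluate at $\gamma=\ep$ to get a $\HYP(\ep)$ fixed point of $f_\ep$, contradicting the key property of $f$. Your additional bookkeeping for the ``in particular'' clause (the $\boldp^0_4$ bound on $P$ and the passage from a Borel uniformization to a Borel selector) is correct and merely spells out what the paper leaves implicit.
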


\begin{proof}
Consider the function $f: \cantor \times \cantor \to \cantor$ as in the relativized version of Corollary \ref{corollary monotone arithmetical function with no HYP fixed point}. Since $f$ is $\Sigma^0_4$-recursive it is also $\bolds^0_4$-measurable. Assume toward a contradiction that there is a Borel-measurable function $u: \cantor \to \cantor$ that chooses a fixed point for each section $f_\gamma$. Then $u$ would be $\del(\ep)$-recursive for some $\ep \in \cantor$ and consequently $u(\ep)$ would be a $\HYP(\ep)$ fixed point of $f_\ep$, contradicting the key property of $f$.
\end{proof}

For compactness reasons no function that satisfies Corollary \ref{corollary monotone arithmetical function with no HYP fixed point} can be recursive. We believe that if we replace the Cantor space with the Baire space and with a necessary modification of $\subseteq$ we can indeed obtain a recursive function $f$.

\begin{conjecture}
\label{conjecture recursive f}
There is a $\Pi^0_1$ partial ordering $\preceq$ on $\baire$ such that every $\preceq$-chain has a least upper bound and a function $f: \baire \times \baire \to \baire$ with the following properties:
\begin{list}{\tu{(}\roman{mycount}\tu{)}}{\usecounter{mycount}}
\item the function $f$ is recursive;
\item each section $f_\gamma: (\baire,\preceq) \to (\baire,\preceq)$ is monotone and therefore it has a fixed point;
\item no fixed point of $f_\gamma$ is $\HYP(\gamma)$.
\end{list}
\end{conjecture}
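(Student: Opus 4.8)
The plan is to transport the Cantor--space example of Corollary~\ref{corollary monotone arithmetical function with no HYP fixed point} to the Baire space, where the absence of compactness creates the room that is unavailable on $\cantor$. My guiding principle is a complexity obstruction that dictates the whole design. A recursive $f\colon\baire\times\baire\to\baire$ is automatically continuous in the product topology; so if the $\preceq$-suprema of chains were \emph{also} computed continuously (coordinatewise, from finite data), then each section $f_\gamma$ would be Scott-continuous, its least fixed point would be the product-limit $\sup_n f_\gamma^n(\bot)$ attained after $\om$ stages, and that point would be $\HYP(\gamma)$ --- exactly what (iii) forbids. Hence the order must force the $\preceq$-suprema to be \emph{non-continuous}, decoupling $\preceq$-monotonicity from Scott-continuity, so that the transfinite iteration of a recursive monotone $f_\gamma$ genuinely climbs past $\om$ and stabilizes only at stage $\ck$. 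This is precisely the behaviour of the stages of $\Phi_\psi$ in Theorem~\ref{theorem positive formula with no HYP fixed point}, whose closure ordinal is $\ck$, and which cannot be realized on the compact space $\cantor$.

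First I would fix a $\Pi^0_1$ order realizing this. I would code a point $\alpha\in\baire$ as a set $A_\alpha\subseteq\om$ together with a bookkeeping of the \emph{stage} at which each element was placed into $A_\alpha$ (an ordinal-notation annotation carried in the values of $\alpha$), and set $\alpha\preceq\beta$ to mean that $\beta$ only \emph{adds} elements and \emph{refines} the stage data coherently, never retracting a decision. The role of the stage annotations is to make chain-completeness compatible with non-continuous suprema: along a $\preceq$-chain each coordinate's decision stabilizes, so a least upper bound exists, but this bound may record a genuinely new \emph{limit} stage that occurs in no member of the chain, and that limit information is not recoverable from any finite amount of the chain. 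Checking that $\preceq$ is $\Pi^0_1$ (a universal number quantifier over a recursive matrix) and that every chain has a least upper bound is then routine.

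Next I would define $f_\gamma$ to execute one stage of the relativized operator $\Phi_{\tilde\psi(\cdot,\tilde R,\gamma)}$ already considered in the text: reading finitely much of $\alpha$, it places into $A$ exactly the elements made derivable by the positive clauses of $\tilde\psi$ from the currently decided data, tagging each with the successor of the stages used, and leaves $\alpha$ unchanged otherwise. By construction $f$ is recursive and $\preceq$-monotone, giving (i). The decisive step is the fixed-point analysis: any $\preceq$-fixed point $\alpha^\ast$ of $f_\gamma$ must have its decided set $A_{\alpha^\ast}$ both closed and grounded under $\tilde\psi(\cdot,\tilde R,\gamma)$, that is, $A_{\alpha^\ast}$ is a fixed point of the relativized formula. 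Since that formula has no $\HYP(\gamma)$-fixed point, $A_{\alpha^\ast}$ is not $\HYP(\gamma)$, and as $\alpha^\ast$ computes $A_{\alpha^\ast}$, neither is $\alpha^\ast$; this yields (iii). Item (ii) follows from monotonicity together with chain-completeness (Bourbaki--Witt).

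The main obstacle is making the three demands cohabit: $\preceq$ must be $\Pi^0_1$ and chain-complete, $f$ must be genuinely recursive, and yet every fixed point must be non-$\HYP(\gamma)$. As the obstruction of the first paragraph shows, this can only succeed if the stage-bookkeeping renders the $\preceq$-suprema non-continuous, so that the recursive $f_\gamma$ is monotone but not Scott-continuous and its transfinite iteration faithfully tracks the $\ck$-length stages of the inductive definition instead of collapsing to an $\om$-stage $\HYP(\gamma)$ fixed point. Arranging the annotations so that coherence stays $\Pi^0_1$-checkable, chains always converge, and --- most delicately --- \emph{every} fixed point, not merely the least one, is forced to encode a fixed point of $\tilde\psi$, is the heart of the matter; the non-compactness of $\baire$, in contrast with the impossibility on $\cantor$, is exactly what leaves room for it.
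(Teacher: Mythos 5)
This statement is posed in the paper as an open \emph{conjecture}; the paper offers no proof of it, so there is nothing to match your argument against, and the only question is whether your sketch actually closes the problem. It does not: neither $\preceq$ nor $f$ is ever defined, and the step you yourself flag as ``the heart of the matter'' is precisely where the known difficulty lives. Your $f_\gamma$ performs one derivation step of $\Phi_{\tilde\psi(\cdot,\tilde R,\gamma)}$ and only ever \emph{adds} elements; a $\preceq$-fixed point of such an operator is therefore only guaranteed to have its decided set \emph{closed} under the operator, i.e.\ to satisfy $\tilde\psi(y,A,\gamma)\Rightarrow y\in A$, not the converse. Closed sets are plentiful and can be $\HYP(\gamma)$ --- even recursive (take everything decided ``in'') --- so without groundedness condition (iii) fails outright. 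You propose to enforce groundedness via stage annotations, but groundedness of a justification structure is a well-foundedness requirement, hence naturally $\Pi^1_1$ and not $\Pi^0_1$; if you allow arbitrary (possibly ill-founded) stage tags so as to keep $\preceq$ arithmetical, then circularly justified, ungrounded points become $\preceq$-fixed points of $f_\gamma$ and can again be $\HYP(\gamma)$. Collapsing this $\Pi^1_1$ obstruction to a $\Pi^0_1$ order is exactly what the conjecture asks for, and your proposal contains no mechanism for it.

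Two further points would also need repair even granting the above. First, least upper bounds: if the sup of a chain must record ``a genuinely new limit stage,'' it must choose a notation for that limit; distinct notations give incomparable upper bounds, so the \emph{least} upper bound required by the statement need not exist, and uncountable chains are not addressed at all. Second, your opening ``obstruction'' (recursive $\Rightarrow$ continuous $\Rightarrow$ the least fixed point is $\sup_n f_\gamma^n(\bot)$ and hence $\HYP(\gamma)$) is a useful heuristic for why $\preceq$-suprema must be topologically discontinuous, but it is not itself a constraint you then discharge; it only sharpens where the construction must be delicate. As it stands the proposal is a reasonable research plan for attacking the conjecture, not a proof of it.
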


If $f$ satisfies the conclusion of the preceding conjecture it follows as above that the set
\[
P = \set{(\gamma,\alpha) \in \cantor \times \cantor}{f(\gamma,\alpha) = \alpha}
\]
is closed, has non-empty sections $P(\gamma)$ for all $\gamma \in \cantor$, and cannot be uniformized by any Borel set.\smallskip

\myemph{Hypedegrees of fixed points.} With the help of Theorem \ref{theorem positive formula with no HYP fixed point} we can derive some results about the hyperdegrees of the fixed points of positive formulae. 

\begin{corollary}
\label{corollary hyperdegrees of fixed points}
Consider the positive formula $\psi$ of Theorem \ref{theorem positive formula with no HYP fixed point} and let
\[
\fix(\psi) = \set{Q \subseteq \om}{Q \ \text{is a fixed point of} \ \psi}
\]
be the set of all fixed points of $\psi$. Then we have the following.
\begin{enumerate}
\item Every hyperdegree from Kleene's $\mathit{O}$ and above is obtained by some fixed point $Q$, \ie
\[
(\forall \alpha)[ O \hleq \alpha \ \longrightarrow \ (\exists Q \in \fix(\psi))[Q \heq \alpha]].
\]
\item There exists a decreasing sequence $A_0 \supseteq A_1 \supseteq \dots \supseteq A_i \supseteq A_{i+1} \supseteq \dots$ of $\del$ subsets of $\fix(\psi)$ \tu{(}we view the latter as a subset of the Baire space\tu{)} and a sequence of fixed points $(Q_i)_{i \in \om}$ with $Q_i \in A_i$ such that 
\[
(\forall Q \in A_{i+1})[Q \not \hleq Q_i].
\]
In particular the hyperdegree of $Q_i$ does not appear in $A_{i+1}$. Additionally the $Q_i$'s can be chosen so that the relativized Church-Kleene ordinal $\ckr{Q_i}$ equals to $\ck$.
\end{enumerate}
\end{corollary}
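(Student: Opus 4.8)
The plan is to exploit a structural feature of $\Phi_\psi$ that neither part states explicitly: the operator splits into independent ``columns''. Membership of $\pair{a,x}$ in $\Phi_\psi(R)$ depends only on elements of $R$ whose second coordinate is again $x$ --- the seed clause contributes $\pair{\pair{0,x},x}$ to column $x$, and the recursive clause refers to $\pair{\rfn{e}(\pair{t,s}),x}$, preserving the second coordinate. Hence $\Phi_\psi=\prod_x\Phi_x$ for column operators $\Phi_x$, and a set is a fixed point of $\psi$ iff each of its columns is a fixed point of the corresponding $\Phi_x$. By the recursion theorem I fix an index $e_0$ with $\rfn{e_0}(n)=\pair{1,e_0}$ for all $n$; then for $c_x:=\pair{\pair{1,e_0},x}$ the defining equivalence collapses to the tautology $c_x\in R\iff c_x\in R$, so $c_x$ is a \emph{free bit} of column $x$. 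Writing $L_x$ for the least fixed point of $\Phi_x$ (which omits $c_x$, since $c_x$ can only trigger itself) and $L_x^{+}$ for the least fixed point of $\Phi_x$ above $\{c_x\}$, both are least fixed points of arithmetical operators uniformly in $x$, hence uniformly $\pii$ and $\hleq O$, with $c_x\in L_x^{+}\setminus L_x$.

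\textbf{Part (1).} Given $\alpha$ with $O\hleq\alpha$, I choose $P\subseteq\om$ with $P\heq\alpha$ and let $Q_P$ be the fixed point whose $x$-th column is $L_x^{+}$ if $x\in P$ and $L_x$ otherwise; this is a fixed point of $\psi$ by the column decomposition. Since $c_x\in Q_P\iff x\in P$ is decided recursively in $Q_P$, I get $P\hleq Q_P$; conversely each column of $Q_P$ is computed from $P$ together with the uniformly $\pii$ data of the $L_x,L_x^{+}$, so $Q_P\hleq P\oplus O$. As $O\hleq\alpha\heq P$, both bounds give $Q_P\heq\alpha$, realizing the prescribed hyperdegree.

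\textbf{Part (2), setup.} First note that $\fix(\psi)$ is arithmetical (a $\boldp^0_4$ condition on $Q$), hence a nonempty $\del$ set, so I may start from $A_0:=\fix(\psi)$; and every nonempty $\sig$ subset of it has, by the \emph{Gandy Basis Theorem}, a member $Q$ with $\ckr{Q}=\ck$, which will supply the low witnesses $Q_i$. The mechanism for cone--avoidance is the numerical fact $\ckr{O}>\ck$ (Kleene's $O$ computes a well--ordering of type $\ck$): if $\ckr{Q_i}=\ck$ then every $Q\hleq Q_i$ also has $\ckr{Q}=\ck$ and therefore $O\not\hleq Q$. Consequently the cone below a low fixed point is disjoint from the family of fixed points $Q_P$ with $O\hleq P$ from Part~(1), all of which satisfy $O\hleq Q_P$. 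The plan is an effective transfinite recursion: given $A_i$ (a perfect $\del$ subset of $\fix(\psi)$ still containing fixed points of $\ckr{}=\ck$), use Gandy basis to pick a low $Q_i\in A_i$, then carve out $A_{i+1}\subseteq A_i$ as a $\del$ set disjoint from the cone below $Q_i$ yet still containing a low fixed point, so that $(\forall Q\in A_{i+1})[Q\not\hleq Q_i]$ holds and the recursion continues.

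The hard part is that $A_{i+1}$ must be \emph{lightface} $\del$ while excluding the countable, non--hyperarithmetic, parameter--dependent cone below $Q_i$ and \emph{simultaneously} retaining a fixed point of $\ckr{}=\ck$ for the next stage; and since any $\sig$ set containing a non--hyperarithmetic real is non--thin, hence perfect, $A_{i+1}$ is forced to be a perfect $\del$ set. I expect to resolve this by an effective perfect--set / Gandy--Harrington fusion: inside $A_i$ I construct a $\del$-recursive perfect scheme of fixed points that are pairwise sufficiently hyper--generic, choose $Q_i$ generic enough that its low hyperdegree computes no branch of a fixed tail subtree, and take $A_{i+1}$ to be that tail subtree; the genericity together with $\ckr{Q_i}=\ck$ both avoids the cone and, via a boundedness (pseudo--wellordering) argument, keeps the surviving branches low, giving $\ckr{Q_{i+1}}=\ck$. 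Making the fusion land inside $\fix(\psi)$ and stay lightface $\del$ at every finite stage is where the real work lies.
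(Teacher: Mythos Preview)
Your Part~(1) is correct and is genuinely different from the paper's argument. The column decomposition of $\Phi_\psi$ is a nice structural observation: the free bit $c_x=\pair{\pair{1,e_0},x}$ really does satisfy $c_x\in\Phi_\psi(R)\iff c_x\in R$, the column least fixed points $L_x,L_x^{+}$ are uniformly $\pii$ (hence recursive in $O$), and your $Q_P$ is a bona fide fixed point with $Q_P\heq P$ whenever $O\hleq P$. The paper instead observes that $\fix(\psi)$ is arithmetical and therefore the recursive injective image of the body $[T]$ of a recursive tree (\cite[4A.7]{yiannis_dst}); since $\psi$ has no $\del$ fixed points, $T$ is a Kleene tree, and Part~(1) then follows from H.~Friedman's theorem that every hyperdegree $\geq_{\rm h} O$ occurs in the body of any Kleene tree. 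Your approach is more self-contained and exhibits the fixed point explicitly; the paper's approach buys a uniform reduction of \emph{both} parts to known facts about Kleene trees.

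Your Part~(2), however, is not a proof --- and you say so yourself (``I expect to resolve this\ldots'', ``Making the fusion land inside $\fix(\psi)$ and stay lightface $\del$ at every finite stage is where the real work lies''). The difficulty you isolate is exactly the one that matters: after choosing a low $Q_i\in A_i$ via Gandy basis, you must produce a \emph{lightface} $\del$ subset $A_{i+1}\subseteq A_i$ with $A_{i+1}\cap\del(Q_i)=\emptyset$ that still contains a low fixed point. Your proposed Gandy--Harrington fusion would, if carried out carefully, essentially re-prove the Kleene-tree lemma the paper cites from \cite{gregoriades_classes_of_Polish_spaces_under_effective_Borel_isomorphism}: for every Kleene tree $S$ there is a Kleene subtree $S'\subseteq S$ and some $\gamma\in[S]$ with $\ckr{\gamma}=\ck$ such that $[S']\cap\del(\gamma)=\emptyset$. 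The paper avoids redoing that work by first passing from $\fix(\psi)$ to $[T]$ via the recursive injection $\pi$ (which preserves hyperdegrees on $[T]$ and sends $\del$ subsets to $\del$ subsets), invoking the cited lemma inductively to get Kleene trees $T=T_0\supseteq T_1\supseteq\cdots$ and $\gamma_i\in[T_i]$ with $[T_{i+1}]\cap\del(\gamma_i)=\emptyset$, and then pushing everything forward: $A_i=\pi[[T_i]]$, $Q_i=\pi(\gamma_i)$. If you want to keep your direct approach, you should at minimum either supply the fusion argument in full or make the same reduction to $[T]$ and cite the lemma; as written, Part~(2) is a plan, not a proof.
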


\begin{proof}
We remark that the set $\fix(\psi)$ is an arithmetical subset of $\baire$; this is because
\begin{align*}
Q \ \text{is a fixed point of} \ \psi 
\iff& \ (\forall y \in \om)[y \in Q \ \longleftrightarrow \ (\om, +, \cdot, 0,1, R) \vDash \psi(y,Q)],
\end{align*}
and the satisfiability relation $\vDash$ is arithmetical.

As it is well-known every $\del$ set is the recursive injective image of a $\Pi^0_1$ subset of $\baire$, see \cite[4A.7]{yiannis_dst}; hence there exists recursive tree $T$ and a recursive function $\pi: \baire \to \baire$ such that $\fix(\psi)$ is the image of the body $[T]$ of $T$ under $\pi$. Moreover $\pi$ is injective on $[T]$.

Since $\pi$ is recursive we have $\pi(\alpha) \in \del(\alpha)$, and using the injectiveness of $\pi$, it is not hard to see that we also have $\alpha \in \del(\pi(\alpha))$ and therefore $\pi(\alpha) \heq \alpha$ for all $\alpha \in [T]$. Recall also that the $\del$-injective image of a $\del$ set is also $\del$; hence if $B \subseteq [T]$ is $\del$ then $Q: = \pi[B]$ is also $\del$, see \cite[4D.7]{yiannis_dst}. The problem is therefore reduced to a question about $[T]$ rather than $\fix(\psi)$.

From Theorem \ref{theorem positive formula with no HYP fixed point} it follows that $T$ had no $\del$-branches; hence it is a \myemph{Kleene tree}, \ie a recursive tree with $[T] \neq \emptyset$ but $[T] \cap \del = \emptyset$. As it was proved by H. Friedman  \cite{friedman_harvey_borel_sets_and_hyperdegrees} every hyperdegreee from Kleene's $\mathit{O}$ and above occurs in the body of a Kleene tree; hence we proved (1).

Assertion (2) is immediate from the following result (see the proof of \cite[3.13]{gregoriades_classes_of_Polish_spaces_under_effective_Borel_isomorphism}): For every Kleene tree $S$ there exists a Kleene tree $S' \subseteq S$ and some $\gamma \in [S]$ with $\ckr{\gamma} = \ck$ such that $[S'] \cap \del(\gamma) = \emptyset$. Thus we can construct inductively a decreasing sequence $T_0 = T \supseteq T_1 \supseteq \dots \supseteq T_i \supseteq T_{i+1} \supseteq \dots$ of Kleene trees and a sequence $(\gamma_i)_{i \in \om}$ such that $\gamma_i \in [T_i]$ and $[T_{i+1}] \cap \del(\gamma_i) = \emptyset$ for all $i$. We take then $A_i = \pi[[T_i]]$ and $Q_i = \pi(\gamma_i)$, $i \in \om$.
\end{proof}

\begin{question}\normalfont
\label{question analogue to FFT}
We have seen above that $\fix(\psi)$, where $\psi$ is as in Theorem \ref{theorem positive formula with no HYP fixed point}, is no different from the body of a Kleene tree as far as $\del$-injections go. It would be interesting to see if other results on Kleene trees can be transferred to the sets of fixed points of positive formulae. For example it was proved by Fokina - S. Friedman - T\"{o}rnquist  \cite{fokina_friedman_sy_toernquist_the_effective_theory_of_Borel_equivalence_relations} that there are Kleene trees $T,S$ and $(\alpha,\beta) \in [T] \times [S]$ such that $[T] \cap \del(\beta) = \emptyset$ and $[S] \cap \del(\alpha) = \emptyset$.\smallskip

\emph{Do there exist positive formulae $\psi(x,S)$, $\chi(x,S)$ and fixed points $Q_\psi$, $Q_\chi$ of $\psi$ and $\chi$ respectively such that $\fix(\psi) \cap \del(Q_\chi) = \emptyset$ and $\fix(\chi) \cap \del(Q_\psi) = \emptyset$?}
\end{question}

\begin{question}\normalfont
\label{question analogue to minimal hyperdegree}
In \cite{gregoriades_classes_of_Polish_spaces_under_effective_Borel_isomorphism} it is asked if a \emph{minimal} hyperdegree can occur in a Kleene tree, and in the same article it is announced that the latter was solved by G.-Kihara. However G.-Kihara have since found a gap in their argumentation and we take the opportunity to \emph{retract the preceding announcement}. So the above question about hyperdegrees remains open.\smallskip

As above one can ask the similar question for the sets of fixed points of positive formulae.\smallskip

\emph{Does there exist a positive formula $\psi$ such that $\fix(\psi)$ contains a minimal hyperdegree but no $\del$ members?}
\end{question}\smallskip

\section{No fixed point of $\psi$ is hyperarithmetical}

The idea is to show that every fixed point of $\psi$ contains recursively the information of all hyperarithmetical sets, and thus it cannot be hyperarithmetical itself.

\subsection*{Encoding the HYP sets of naturals} For the needs of our proof we use the following natural encoding of the hyperarithmetical sets.

\begin{definition}\normalfont
\label{definition of encoding of HYP}
We consider the following formulae (in the corresponding extensions of $\ca{L}$),
\begin{align*}
\psi_0(a,J) 
\iff& \ (\exists e)[a = \pair{0,e}] \ \vee \ (\exists e)[a = \pair{1,e} \ \& \ (\forall k)[\rfn{e}(k) \in J]]\\
\psi_1(a,x,A) 
\iff& \ (\exists e)[a = \pair{0,e} \ \& \ x = e] \ \vee \ (\exists e)[a = \pair{1,e}\\
& \hspace*{35mm} \& \ (\exists t)(\forall s)[(\rfn{e}(\pair{t,s}),x) \in A]],  
\end{align*}
where above $a,x \in \om$, $J \subseteq \om$ and $A \subseteq \om^2$.

It is evident that the formulae $\psi_0$ and $\psi_1$ are monotone, hence they have a fixed point. We define 
\begin{align*}
\hcode =& \ \text{the least fixed point of} \ \psi_0\\
\hsett =& \ \text{the least fixed point of} \ \psi_1.
\end{align*}
In other words the sets $\hcode$ and $\hsett$ are the \myemph{least sets} satisfying the equivalences
\begin{align}
\label{equation definition of encoding of HYP A}
a \in \hcode 
\iff&  \ (\exists e)[a = \pair{0,e}] \ \vee \ (\exists e)[a = \pair{1,e} \ \& \ (\forall k)[\rfn{e}(k) \in \hcode]]\\
\label{equation definition of encoding of HYP B}
(a,x) \in \hsett
\iff&  \ (\exists e)[a = \pair{0,e} \ \& \ x = e]\\
\nonumber
& \hspace*{8mm} \vee \ (\exists e)[a = \pair{1,e} \ \& \ (\exists t)(\forall s)[(\rfn{e}(\pair{t,s}),x) \in \hsett]]
\end{align}
where $a,x \in \om$.\smallskip

If $A = \hset{a}$ ($=$ the $a$-section of $\hsett$)  we say that {\bf $a$ is an \bold{$\hcode$}-code for \bold{$A$}}.
\end{definition}

\begin{remark}\normalfont
\label{remark following definition of encoding HYP}
\newcounter{myindex}
We make some simple remarks about the preceding definition.
\begin{list}{(\roman{myindex})}{\usecounter{myindex}\leftmargin=2mm}
\item It is evident that the formulae $\psi_0$ and $\psi_1$ are $\pii$ on $\pii$, hence from the Norm Induction Theorem the sets $\hcode$ and $\hsett$ are $\pii$.\smallskip

\item Clearly the $a$-sections \hset{a} of $\hsett$, $a \in \hcode$, satisfy
\begin{align}
\label{equation definition of encoding of HYP C}
\hset{a} =
\begin{cases}
\text{the singleton} \ \{e\}, \ & \ \text{if} \ a = \pair{0,e},\\[1ex]
\textstyle \bigcup_{t} \bigcap_s  \hset{\rfn{e}(\pair{t,s})},  \ & \ \text{if} \ a = \pair{1,e}.
\end{cases}
\end{align}

\item It is useful in the sequel to describe the definition of $\hcode$ ``from below". Define by recursion the family $(\hcode_\xi)_{\xi: \ \text{ordinal}}$ of subsets of $\om$ as follows
\begin{align}\label{equation definition of Ixi}
\begin{cases}
\quad \hcode_0 
=& \hspace*{-2mm} \set{\pair{0,e}}{e \in \om}\\
\quad \hcode_\xi
=& \hspace*{-2mm} \set{\pair{1,e}}{e \in \om \ \& \ (\forall k)(\exists \eta < \xi)[\rfn{e}(k) \in \hcode_\eta]}. 
\end{cases}
\end{align}
Of course the iteration stabilizes at some countable ordinal; in fact at $\ck$.

\myemph{We claim} that $\hcode = \cup_\xi \hcode_\xi$. It is easy to check by induction that $\hcode_\xi \subseteq \hcode$ for all ordinals $\xi$. Conversely, since $\hcode$ is the least fixed point of $\psi_0$, it is enough to show that $\cup_\xi \hcode_\xi$ is a fixed point, \ie that it satisfies (\ref{equation definition of encoding of HYP A}).

The latter is easy to do. If $a \in \cup_\xi\hcode_\xi$ then either $a \in \hcode_0$ in which case $a = \pair{0,e}$ for some $e$; or $a \in \hcode_\xi$ for some $\xi > 0$, in which case for all $k$ there exists $\eta < \xi$ such that $\rfn{e}(k) \in \hcode_\eta \subseteq \cup_\zeta \hcode_\zeta$. Hence the set $\cup_\xi\hcode_\xi$ satisfies the direct implication of (\ref{equation definition of encoding of HYP A}). For the converse implication it is clear that if $a$ satisfies the first conjunct then $a \in \hcode_0$, and if $a$ satisfies the second one then $a = \pair{1,e}$ for some $e$ and for all $k$ there exists some $\eta_k$ such that $\rfn{e}(k) \in \hcode_{\eta_k}$; then $a \in \hcode_\xi$, where $\xi = \sup\set{\eta_k}{k \in \om}+1$. 
\end{list}

We \myemph{fix once and for all} the sequence $(\hcode_\xi)_\xi$ defined in (\ref{equation definition of Ixi}).
\end{remark}

\begin{definition}\normalfont
\label{definition norm on hcode}
Given $a \in \hcode$ we define \myemph{the norm} $\bold{\normI{a}}$ \myemph{of} $\bold{a}$ as follows: 
\[
\normI{a} = \ \text{the least $\xi$ such that} \ a \in \hcode_\xi. 
\]

It is evident that
\begin{align}
\label{equation definition norm on hcode A}
\normI{a} = 0 \iff& (\exists e)[a = \pair{0,e}] \quad \text{and}\\
\label{equation definition norm on hcode B}
\normI{a} > 0 \ \& \ a = \pair{1,e} \Longrightarrow& \ (\forall k)[\rfn{e}(k) \in \hcode \ \& \ \normI{\rfn{e}(k)} < \normI{a}].
\end{align}
\end{definition}\smallskip

The main aim is to show that every hyperarithmetical subset of $\om$ is of the form $\hset{a}$ for some $a \in I$. In fact with some more work one can show that the sets $\hset{a}$, $a \in I$, are exactly the hyperarithmetical ones, but this is not necessary for our purposes. 

\begin{lemma}
\label{lemma provides encoding of hyp}
For every hyperarithmetical set $H \subseteq \om$ there exists some $a \in \hcode$ such that $$H = \hset{a} = \set{x}{(a,x) \in \hsett}.$$
\end{lemma}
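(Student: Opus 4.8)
The plan is to first pin down the \emph{decoding semantics} of $\hcode$ and then to show that the family of decoded sets exhausts \HYP. For the semantics I would prove, by transfinite induction on the norm $\normI{a}$, that for every $a \in \hcode$ one has
\[
\hset{\pair{0,e}} = \{e\}
\qquad\text{and}\qquad
\hset{\pair{1,e}} = \bigcup_{t}\bigcap_{s}\hset{\rfn{e}(\pair{t,s})}.
\]
Since $\hsett$ is a fixed point it satisfies the equivalence (\ref{equation definition of encoding of HYP B}) literally, so for $a = \pair{0,e}$ only the first disjunct can hold and $\hset{a}=\{e\}$; for $a = \pair{1,e}$ with $\normI{a}>0$ the inputs $\rfn{e}(\pair{t,s})$ all lie in $\hcode$ with strictly smaller norm by (\ref{equation definition norm on hcode B}), so $\hset{a}$ is determined by the fixed-point equation from lower-norm values and equals the displayed union of intersections. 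The key point is that, restricted to $a \in \hcode$, this recursion is well founded, so the displayed formulas hold regardless of how the least fixed point $\hsett$ behaves off $\hcode$.

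Second, I would record the closure properties of the family $\ca{C} = \set{\hset{a}}{a \in \hcode}$ that follow at once from the semantics. Taking the inner intersection (resp. the outer union) trivial by repeating a single argument shows that $\ca{C}$ contains every singleton $\{e\}$ and is closed under \emph{effective countable unions} and \emph{effective countable intersections}: given a recursive sequence $(a_n)_n$ of members of $\hcode$, the $s$-$m$-$n$ theorem produces an index $e$ with $\rfn{e}(\pair{n,s}) = a_n$ (resp. $\rfn{e}(\pair{t,n}) = a_n$), and then $\pair{1,e} \in \hcode$ with $\hset{\pair{1,e}} = \bigcup_n \hset{a_n}$ (resp. $\bigcap_n \hset{a_n}$). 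Here the crucial bookkeeping is that $\normI{\pair{1,e}} \le \sup_n \normI{a_n} + 1 < \ck$, so the new code genuinely lands in $\hcode$. From this, $\emptyset$, $\om$, and every nonempty c.e.\ set --- being a recursive union of singletons $\pair{0,g(t)}$ --- belong to $\ca{C}$, and hence so does every co-c.e.\ set, written as a recursive intersection of the recursive sets $\set{x}{\neg R(x,s)}$.

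Third, I would climb the effective Borel (hyperarithmetic) hierarchy. Every \HYP subset of $\om$ lies in some effective class $\Sigma^0_\xi$ or $\Pi^0_\xi$ with $\xi < \ck$, and these classes are generated from the recursive sets by \emph{uniformly effective} countable unions (for the $\Sigma$-levels) and countable intersections (for the $\Pi$-levels); by De~Morgan one pushes all complementations down to the recursive base, which is already inside $\ca{C}$ and closed under complement. A transfinite induction using the two closure operations of the previous paragraph then places every $\Sigma^0_\xi$ and $\Pi^0_\xi$ set into $\ca{C}$, which is the lemma. To make this induction legitimate one carries it out as an \emph{effective transfinite recursion}: using the recursion theorem along a system of notations for the ordinals $\xi < \ck$ (equivalently along the norms $\normI{\cdot}$), one defines recursive functions assigning to each notation the $\hcode$-codes of the corresponding $\Sigma^0_\xi$- and $\Pi^0_\xi$-sets, checking at each successor and limit stage that the constructed indices are total and point to strictly lower-norm members of $\hcode$.

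The main obstacle is exactly this last step: organizing the effective transfinite recursion so that the witnessing partial recursive functions are provably total where they must be, and so that the norms stay controlled, i.e.\ that every constructed code really belongs to $\hcode$. This is the source of the ``many detailed computations'' --- the explicit $s$-$m$-$n$ and recursion-theorem manipulations together with the norm estimates --- whereas the union/intersection semantics and the De~Morgan reduction are routine once the decoding in the first step is in place.
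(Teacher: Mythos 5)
Your plan is correct but takes a genuinely different route from the paper's. The paper shows that $\set{\hset{a}}{a \in \hcode}$ is an effective Borel $\sigma$-algebra in the sense of \cite[7B]{yiannis_dst} --- uniform codes for singletons, uniform effective countable unions, and, crucially, uniform closure under \emph{complementation} --- and then concludes at once from Kleene's theorem \cite[7B.7]{yiannis_dst} that $\del$ is the \emph{least} such $\sigma$-algebra; essentially all of its technical work goes into the complementation function $u_2$, built via the Recursion Theorem (its Claims 2 and 3) and verified by induction on $\normI{a}$. You never complement codes at all: exploiting the $\bigcup_t\bigcap_s$ shape of the decoding you get closure under effective countable unions \emph{and} intersections by degenerating one of the two quantifiers (your union construction is in fact simpler than the paper's, which merges two unions through a pairing function and needs the normalization $f_0$ of its Claim 1), you push all complements to the recursive base by De Morgan, and you climb the effective Borel hierarchy by an effective transfinite recursion on ordinal notations. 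The trade-off is clear: the paper outsources to the cited minimality theorem the entire step of showing that every $\del$ set is captured, concentrating the delicate recursion-theorem work in one construction, whereas you dispense with $u_2$ but must rely on the identification of \HYP with the effective Borel hierarchy of recursive rank (a theorem of comparable depth to the one cited) and re-run the hierarchy-climbing yourself; your step 3, with its totality and norm bookkeeping along notations, is exactly where the ``many detailed computations'' reappear in the role of the paper's Claims 2 and 3. Two small repairs: the decoding identities of your first step hold for every $a$ of the indicated form directly from the fixed-point equivalence (\ref{equation definition of encoding of HYP B}), the induction on $\normI{a}$ being needed only for well-foundedness on $\hcode$; and at the base you should code an arbitrary recursive set uniformly in a characteristic index, e.g.\ as $\bigcup_t\bigcap_s S_{t,s}$ with $S_{t,s}=\{t\}$ when $t$ is in the set and with $S_{t,s}$ alternating between $\{t\}$ and $\{t+1\}$ otherwise, since ``nonempty c.e.\ set equals the range of a total recursive function'' is neither uniform nor applicable to the empty set.
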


Using the preceding result we can prove Theorem \ref{theorem positive formula with no HYP fixed point} as follows.

\begin{lemma}
\label{lemma reduction identity}
If $Q$ is a fixed point of $\psi$ then for all $a \in \hcode$ and all $x \in \om$ we have
\[
x \in \hset{a} \iff \pair{a,x} \in Q.
\]
\end{lemma}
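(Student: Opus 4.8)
The plan is to argue by induction on the norm $\normI{a}$, exploiting the fact that the formula $\psi$ of Theorem~\ref{theorem positive formula with no HYP fixed point} is nothing but the ``flattened'' version of $\psi_1$. Under the identification of $\pair{a,x}$ with the pair $(a,x)$, a set $Q \subseteq \om$ is a fixed point of $\psi$ precisely when, for all $a,x$,
\[
\pair{a,x} \in Q \iff (\exists e)[a = \pair{0,e} \ \& \ x = e] \vee (\exists e)[a = \pair{1,e} \ \& \ (\exists t)(\forall s)[\pair{\rfn{e}(\pair{t,s}),x} \in Q]].
\]
Thus $Q$ obeys the very recursion (\ref{equation definition of encoding of HYP B}) that defines $\hsett$. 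The subtle point is that $\hsett$ is the \emph{least} solution while $Q$ is arbitrary, yet the two must still agree on all pairs $\pair{a,x}$ with $a \in \hcode$; the reason is that the norm $\normI{\cdot}$ certifies that the recursion computing $\hset{a}$ is well-founded along codes in $\hcode$, and this pins the values down for \emph{any} solution.

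For the base case, suppose $\normI{a} = 0$. By (\ref{equation definition norm on hcode A}) we have $a = \pair{0,e}$ for some $e$, so (\ref{equation definition of encoding of HYP B}) gives $x \in \hset{a} \iff x = e$. On the other side, since $\pair{\cdot}$ is injective, $a = \pair{0,e}$ cannot be of the form $\pair{1,e'}$, so the fixed point equation for $Q$ collapses to $\pair{a,x} \in Q \iff x = e$ as well. Hence both sides are equivalent to $x = e$.

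For the inductive step, suppose $\normI{a} > 0$ and that the claim holds for every $b \in \hcode$ with $\normI{b} < \normI{a}$. By (\ref{equation definition norm on hcode A}) we have $a = \pair{1,e}$ for some $e$, and by (\ref{equation definition norm on hcode B}) each $\rfn{e}(\pair{t,s})$ lies in $\hcode$ with $\normI{\rfn{e}(\pair{t,s})} < \normI{a}$. Unfolding $\hset{a}$ via (\ref{equation definition of encoding of HYP B}) and then applying the induction hypothesis to each $\rfn{e}(\pair{t,s})$ (simultaneously in $t,s$, which is what permits the $(\exists t)(\forall s)$ block to transfer verbatim), we obtain
\[
x \in \hset{a} \iff (\exists t)(\forall s)[\pair{\rfn{e}(\pair{t,s}),x} \in Q].
\]
On the other hand, injectivity of $\pair{\cdot}$ again rules out the first disjunct of $\psi$, so the fixed point equation for $Q$ reads exactly $\pair{a,x} \in Q \iff (\exists t)(\forall s)[\pair{\rfn{e}(\pair{t,s}),x} \in Q]$. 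Comparing the two displays finishes the step.

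The only genuine obstacle---and the reason the statement survives passing from the least fixed point to an arbitrary one---is the well-foundedness furnished by the norm: the induction hypothesis must be available at once for all of the countably many values $\rfn{e}(\pair{t,s})$ appearing under the quantifiers, and this is exactly what (\ref{equation definition norm on hcode B}) supplies. Everything else is a routine term-by-term comparison of the defining recursion of $\hsett$ with the fixed point equation for $Q$, with injectivity of the pairing used only to separate the two cases $a = \pair{0,e}$ and $a = \pair{1,e}$.
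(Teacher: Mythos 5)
Your proposal is correct and follows essentially the same route as the paper: induction on the norm $\normI{a}$, with the base case $a = \pair{0,e}$ reducing both sides to $x = e$ and the inductive step unfolding (\ref{equation definition of encoding of HYP B}) and the fixed point equation for $Q$ in parallel via the induction hypothesis applied to the $\rfn{e}(\pair{t,s})$. Your extra remarks on injectivity of the pairing and on why well-foundedness of the norm lets the argument apply to an arbitrary (not just the least) fixed point are accurate elaborations of steps the paper leaves implicit.
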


\begin{proof}
This is done by induction on $\normI{a}$. Suppose that $a$ is in $I$ and that $\normI{a} = 0$. Then from (\ref{equation definition norm on hcode A}) and (\ref{equation definition of encoding of HYP B}) we get $a = \pair{0,e}$ for some $e \in \om$ and $\hset{a}$ is the singleton $\{e\}$. We then compute
\begin{align*}
x \in \hset{a} 
\iff& \ x = e\\
\iff& \ \psi(\pair{\pair{0,e},x},Q) \hspace*{5mm} \text{(definition of $\psi$)}\\
\iff& \ \pair{\pair{0,e},x} \in Q \hspace*{7mm} \text{($Q$ is a fixed point)}\\
\iff& \ \pair{a,x} \in Q
\end{align*}
for all $x \in \om$.

Assume now that $\normI{a} > 0$ and that the assertion is true for all $b \in I$ with $\normI{b} < \normI{a}$. Then $a$ has the form $\pair{1,e}$ for some $e$, and from (\ref{equation definition norm on hcode B}) we have $\rfn{e}(k) \in \hcode$ and $\normI{\rfn{e}(k)} < \normI{a}$ for all $k$. We compute
\begin{align*}
x \in \hset{a} 
\iff& \ x \in \hset{\pair{1,e}}\\
\iff& \ (\exists t)(\forall s)[x \in \hset{\rfn{e}(\pair{t,s})}] \hspace*{7mm} \text{(from (\ref{equation definition of encoding of HYP C}))}\\
\iff& \ (\exists t)(\forall s)[\pair{\rfn{e}(\pair{t,s}),x} \in Q] \hspace*{5mm} \text{(by the ind. hyp.)}\\
\iff& \ \psi(\pair{\pair{1,e},x},Q) \hspace*{23mm} \text{(definition of $\psi$)}\\
\iff& \ \pair{\pair{1,e},x} \in Q \hspace*{25mm} \text{($Q$ is a fixed point)}\\
\iff& \ \pair{a,x} \in Q,
\end{align*}
for all $x \in \om$. This concludes the inductive step.
\end{proof}

\begin{lemma}
\label{lemma the fixed point is not hyp}
No fixed point $Q$ of $\psi$ is hyperarithmetical.
\end{lemma}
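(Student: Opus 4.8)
The plan is to deduce the lemma from the two preceding results by a single diagonal argument, so I would argue by contradiction. Suppose toward a contradiction that some fixed point $Q$ of $\psi$ is hyperarithmetical, i.e. $Q \in \del$. The first observation I would record is that Lemmas \ref{lemma provides encoding of hyp} and \ref{lemma reduction identity} combine to say that $Q$ behaves like a \emph{universal} object for the hyperarithmetical subsets of $\om$: by Lemma \ref{lemma reduction identity} each $a \in \hcode$ gives $\hset{a} = \set{x}{\pair{a,x} \in Q}$, while by Lemma \ref{lemma provides encoding of hyp} every hyperarithmetical $H \subseteq \om$ equals $\hset{a}$ for some $a \in \hcode$. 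Hence every hyperarithmetical set occurs as a section $\set{x}{\pair{a,x} \in Q}$ with $a \in \hcode$, and this is exactly the feature a diagonalization can exploit.

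Next I would introduce the diagonal set
\[
D = \set{a \in \om}{\pair{a,a} \notin Q}.
\]
Since $Q \in \del$, the pairing $\pair{\cdot}$ is recursive, and $\del$ is closed under recursive substitution and complementation, the set $D$ is again $\del$, hence a hyperarithmetical subset of $\om$. Applying Lemma \ref{lemma provides encoding of hyp} to $H = D$ produces some $a_0 \in \hcode$ with $\hset{a_0} = D$, and then Lemma \ref{lemma reduction identity} yields
\[
D = \hset{a_0} = \set{x}{\pair{a_0,x} \in Q}.
\]

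Finally I would instantiate this identity at the point $x = a_0$. On the one hand the displayed equality gives $a_0 \in D \iff \pair{a_0,a_0} \in Q$, while on the other hand the definition of $D$ gives $a_0 \in D \iff \pair{a_0,a_0} \notin Q$. These two equivalences are contradictory, so no hyperarithmetical fixed point of $\psi$ can exist. I do not expect any genuine obstacle in this final lemma: all of the substantive work is carried by Lemmas \ref{lemma provides encoding of hyp} and \ref{lemma reduction identity}, and what remains is only the closing self-referential (``Cantor--Russell'') diagonal. The single point worth checking with care is that $D$ really belongs to $\del$, which is immediate from the closure properties of $\del$ used above; once that is in place the contradiction is forced.
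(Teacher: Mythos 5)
Your proof is correct and follows essentially the same route as the paper: both deduce the lemma from Lemmas \ref{lemma provides encoding of hyp} and \ref{lemma reduction identity} by diagonalization, the only (cosmetic) difference being that you diagonalize with the cleaner set $\set{a}{\pair{a,a}\notin Q}$ where the paper uses $\set{x}{(\exists a,k)[x=\pair{a,k}~\&~\pair{a,x}\notin Q]}$. No gaps.
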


\begin{proof}
Suppose that $Q$ is a fixed point of $\psi$. If it were $Q \in \HYP$ then the set
\[
P = \set{x \in \om}{(\exists a,k)[x = \pair{a,k} \ \& \ \pair{a,x} \not \in Q]}
\]
would be hyperarithmetical as well. From Lemma \ref{lemma provides encoding of hyp} there would be some $a^\ast \in \hcode$ such that $P = \hset{a^\ast}$. We compute
\begin{align*}
\pair{a^\ast,0} \in \hset{a^\ast} \iff& \ \pair{a^\ast,0} \in P\\ 
\iff& \ \pair{a^\ast,\pair{a^\ast,0}} \not \in Q\\
\iff& \ \pair{a^\ast,0} \not \in \hset{a^\ast},
\end{align*}
a contradiction, where in the last equivalence we used Lemma \ref{lemma reduction identity}. 
\end{proof}

It remains to prove Lemma \ref{lemma provides encoding of hyp}. In the first draft of this article that is available on the arXiv we prove the latter using the effective version of the notion of a $\sigma$-algebra; to be more specific we show that the family $\set{\hset{a}}{a \in \hcode}$ is an {\em effective Borel $\sigma$-algebra} \cite[7B.7]{yiannis_dst} (the latter uses the term ``effective $\sigma$-field"). Intuitively an effective Borel $\sigma$-algebra is a family of sets, which contains recursively-uniformly the basic sets (in the case of $\om$ these are the singletons), and is recursively-uniformly closed under the operations of complement and countable union.

As it was proved by Kleene the family of all $\del = \HYP$ subsets of $\om$ is the least effective Borel $\sigma$-algebra on $\om$, see \cite[7B.7]{yiannis_dst}. This implies that $\HYP \subseteq \set{\hset{a}}{a \in \hcode}$. 

In this updated version of the article we prove Lemma \ref{lemma provides encoding of hyp} as a corollary to a  Suslin-Kleene type result, which we find interesting in its own right.

We fix a set $G \subseteq \om \times \om$, which is \myemph{universal} for the family of all $\sig$ subsets of $\om$, \ie $G$ is $\sig$ and for all $\sig$ sets $A \subseteq \om$ there is $e \in \om$ such that $A = G(e)$. We think of such an $e$ as a \myemph{code} for $A$. 

Given two disjoint sets $A$ and $B$ we say that a set $C$ \myemph{separates $\bold{A}$ from $\bold{B}$} if $A \subseteq C$ and $C \cap B = \emptyset$. We will show that the separation of disjoint $\sig$ sets can be witnessed recursively-uniformly using the $\hcode$-codes.

\begin{proposition}
\label{proposition separation}
If $A, B \subseteq \om$ are disjoint $\sig$ sets then there is $a \in \hcode$ such that the set $C = \hset{a}$ separates $A$ from $B$. 

In fact the preceding separation can be done uniformly in the codes, \ie there is a recursive function $u: \om \times \om \to \om$ such that whenever $G(e_0) \cap G(e_1) = \emptyset$ then $u(e_0,e_1) \in \hcode$ and $\hset{u(e_0,e_1)}$ separates $G(e_0)$ from $G(e_1)$.
\end{proposition}

Lemma \ref{lemma provides encoding of hyp} follows easily from the preceding result. Given a \HYP set $H \subseteq \om$ the sets $A : = H$ and $B := \om \setminus H$ are disjoint $\sig$ sets, and if $\hset{a}$ separates $A$ from $B$ we necessarily have $H = A = \hset{a}$.\smallskip

We continue with the \myemph{proof of Proposition \ref{proposition separation}}. One can do this indirectly using the statement of the Suslin-Kleene Theorem \cite[7B.4]{yiannis_dst}; the latter gives a recursive way to transfer codes for two disjoint $\sig$ sets to a {\em Borel} code for a separating $\del$ set. So we need is a recursive way that transfers a recursive  Borel code for the given $\del$ subset of $\om$ to a code  $a \in \hcode$ for the same set. This is feasible but in effect it is a repetition of our initial proof with the effective Borel $\sigma$-algebras.

Instead we employ the proof of the Suslin-Kleene Theorem to get the $\hcode$-code of the separating set. In fact the $\hcode$-code will arise somewhat more naturally, since the separating set in the Suslin-Kleene Theorem is obtained by applying successively the operation $\textstyle \bigcup_{t \in \om} \bigcap_{s \in \om}$ like we do with $\hsett$.\smallskip

To ensure the uniformity in the separation we need some auxiliary results. First we fix the function $u_1: \om \to \om: u_1(e) = \pair{0,e}$. It is evident from (\ref{equation definition of encoding of HYP A}) and (\ref{equation definition of encoding of HYP B}) that $u_1(e)$ is an $\hcode$-code for the singleton $\{e\}$.

Further we can find some $e^\ast \in \om$ such that
\[
\rfn{e^\ast}(\pair{t,s}) =
\begin{cases}
u_1(0), \ & \ \text{if} \ s=0\\[1ex]
u_1(1), \ & \ \text{if} \ s \neq 0,
\end{cases}
\]
for all $t,s$. Then  $\textstyle \bigcup_{t \in \om} \bigcap_{s \in \om} \hset{\rfn{e^\ast}(\pair{t,s})} = \{0\} \cap \{1\} = \emptyset$. Hence the number 
\[
e_\emptyset : = \pair{1,e^\ast}
\]
is an $\hcode$-code for the empty set.

\begin{claim}\normalfont
\label{claim auxiliary functions for uniformity}
The following hold.
\begin{list}{(\roman{myindex})}{\usecounter{myindex}\leftmargin=2mm}


\item For every natural $k \geq 1$ and every semirecursive set $P \subseteq \om^k \times \om$ there exists a recursive function $u_{P}: \om^k \to \om$ such that  for all $y \in \om^k$ the number $u_{P}(y)$ is an $\hcode$-code for the $y$-section $P(y)$ of $P$.

\item There exist recursive functions $u_\vee, u_\wedge: \om^2 \to \om$ such that for all $e_0, e_1 \in \hcode$ the numbers $u_\vee(e_0,e_1)$ and $u_\wedge(e_0,e_1)$ are $\hcode$-codes for the union $\hset{e_0} \cup \hset{e_1}$ and the intersection $\hset{e_0} \cap \hset{e_1}$ respectively.
\end{list}
\end{claim}

\begin{proof}
For (i) we consider a semirecursive $P \subseteq \om^k \times \om$; then there is a recursive function $f: \om \to \om^k \times \om$ that enumerates it,
\[
(y,x) \in P \iff \exists t \ f(t) = (y,x).
\]

We consider the projection functions $\pr_1: \om^k \times \om \to \om^k: (y,x) \mapsto y$ and $\pr_2: \om^k \times \om \to \om: (y,x) \mapsto x$.

Then we have
\begin{align*}
x \in P(y) 
\iff& \ \exists t  \ f(t) = (y,x)\\
\iff& \ \exists t \ (x \in \{\pr_2(f(t))\} \ \&  \ y = \pr_1(f(t))).
\end{align*}

There is some $e^\ast$ such that
\[
\rfn{e^\ast}(y,\pair{t,s}) =
\begin{cases}
u_1(\pr_2(f(t))), & \ \text{if} \ y = \pr_1(f(t))\\
e_\emptyset, & \ \text{if} \ y \neq \pr_1(f(t)),
\end{cases}
\]
for all $y,t,s$, where $u_1$ and $e_\emptyset$ are as above. We then have that\allowdisplaybreaks
\begin{align*}
P(y) 
=& \ \textstyle \bigcup_{t \in \om, y = \pr_1(f(t))} \ \{\pr_2(f(t))\}\\
=& \ \textstyle \bigcup_{t \in \om, y = \pr_1(f(t))} \ \hset{u_1(\pr_2(f(t)))}\\
=& \ \textstyle \bigcup_{t \in \om} \ \hset{\rfn{e^\ast}(y,\pair{t,0})}\\
& \ \hspace*{25mm} \text{(since $e_\emptyset$ is the code for $\emptyset$)}\\
=& \ \textstyle \bigcup_{t \in \om} \bigcap_{s \in \om} \ \hset{\rfn{e^\ast}(y,\pair{t,s})}\\
& \ \hspace*{15mm} \text{(since the value $\rfn{e^ \ast}(y,\pair{t,s})$ is independent of $s$)}.\\
\end{align*}

From the $s$-$m$-$n$ Theorem there exists a recursive function $S$ such that
\[
\rfn{e^\ast}(y,\pair{t,s}) = \rfn{S(e^\ast,y)}(\pair{t,s})
\]
for all $y,t,s$. Since $\rfn{e^\ast}(y,\pair{t,s}) \in \hcode$ for all $y,t,s$, it follows from (\ref{equation definition of encoding of HYP A}) that $\pair{1,S(e^\ast,y)} \in \hcode$ for all $y$. Moreover\allowdisplaybreaks
\begin{align*}
P(y) 
=& \ \textstyle \bigcup_{t \in \om} \bigcap_{s \in \om} \ \hset{\rfn{e}(y,\pair{t,s})}\\
=& \ \textstyle \bigcup_{t \in \om} \bigcap_{s \in \om} \ \hset{ \rfn{S(e^\ast,y)}(\pair{t,s})}\\
=& \ \textstyle \hset{\pair{1,S(e^\ast,y)}}
\end{align*}
for all $y$, where in the last equality above we used (\ref{equation definition of encoding of HYP C}).

Therefore if we put  $u_P(y) = \pair{1,S(e^\ast,y)}$ we have the required property.\smallskip

For (ii) from there is some $e^{\ast}_1$ such that
\[
\rfn{e^{\ast}_1}(\pair{t,s}) =
\begin{cases}
e_0, & \ \text{if} \ t = 0,\\
e_1, &  \ \text{if} \ t \neq 0,
\end{cases}
\]
for all $s,t$. As before we have
\[
\hset{e_0} \cup \hset{e_1} 
= \textstyle \bigcup_{t} \hset{\rfn{e^{\ast}_1}(\pair{t,0})}
=  \bigcup_{t} \bigcap_s \hset{\rfn{e^{\ast}_1}(\pair{t,s})}
= \hset{\pair{1,e^{\ast}_1}}.
\]
Therefore we take $u_\vee(e_0,e_1) = \pair{1,e^{\ast}_1}$. The assertion about $u_\wedge$ is essentially the same, we just exchange $t$ with $s$ in the distinction of cases.
\end{proof}

We proceed with more on notation. Recall the set $\om^{< \om}$ of all finite sequences of naturals. Given $u = (u_0,\dots,u_{n-1})$, $v = (v_0,\dots,v_{m-1}) \in \om^{< \om}$, we denote by $\cn{u}{v}$ the \myemph{concatenation} of $u$ and $v$, \ie
\[
\cn{u}{v} = (u_0,\dots,u_{n-1},v_0,\dots,v_{m-1}).
\]

By $u \sqsubseteq v$ we mean that $u$ is an \myemph{initial segment} of $v$, \ie $n \leq m$ and for all $i < n$ we have $u_i = v_i$. We extend $\sqsubseteq$ to a relation between finite sequences and elements of the Baire space the natural way,
\[
u \sqsubseteq \gamma \iff \forall i < n \ u_i = \gamma(i), 
\]
where $u = (u_0,\dots,u_{n-1})$ and $\gamma \in \baire$. We do the same for the concatenation,
\[
\cn{u}{\gamma} = (u_0,\dots,u_{n-1},\gamma(0),\gamma(1), \gamma(2),\dots).
\]

Recall that a \myemph{tree} on the naturals is a non-empty set $T$ of finite sequences of naturals that is closed from below under $\sqsubseteq$,
\[
\forall u, v \  [ (u \sqsubseteq v \ \& \ v \in T) \ \longrightarrow \ u \in T ].
\]
The empty sequence is a member of every tree. An \myemph{infinite branch} of $T$ is an element $\alpha \in \baire$ for which $(\alpha(0),\dots, \alpha(n)) \in T$ for all $n$. The set of all infinite branches of $T$ is the \myemph{body} of $T$ and is denoted by $[T]$.

A tree is \myemph{recursive} if its characteristic function $\chi_T: \om^{< \om} \to \{0,1\}$ is recursive.

We also consider \myemph{trees of pairs}, these are sets of finite sequences of pairs of naturals. A typical element of such a tree is of the form
\[
((u_0,v_0),\dots,(u_{m-1},v_{m-1})).
\]
We will omit the inner parenthesis.  We will also denote the latter sequence of pairs by $(u,v)$, where $u = (u_0,\dots,u_{m-1})$ and $v = (v_0,\dots,v_{m-1})$. The infinite branches of trees of pairs are defined analogously. The recursive trees of pairs are defined via some standard recursive bijection between $\om^2$ and $\om$, or simply by using $\pair{\cdot}$.

Next we turn our attention to the separation. Recall the set $G \subseteq \om \times \om$, which is universal for the $\sig$ subsets of $\om$. Since $G$ is $\sig$, as it is well-known there exists a recursive tree $T$ on the naturals such that
\[
n \in G(e) \iff \exists \gamma \ \cn{(e,n)}{\gamma} \in [T]
\]
for all $e$, $n$. We define
\[
G_u(e) = \set{n}{\exists \gamma \ (u \sqsubseteq \gamma \ \& \ \cn{(e,n)}{\gamma} \in [T])},
\]
where $u \in \om^{<\om}$.

It is easy to verify that
\[
G_u(e) = \textstyle \bigcup_{i \in \om} G_{\cn{u}{(i)}}(e).
\]

For all naturals $e_0$, $e_1$ and $n$ we define the tree of pairs $J_n^{e_0,e_1}$,
\[
(u_0,v_0,\dots,u_{m-1},v_{m-1}) \in J_n^{e_0,e_1} \iff \cn{(e_0,n)}{u} \in T \ \& \ \cn{(e_1,n)}{v} \in T,
\]
where $u = (u_0,\dots,u_{m-1})$ and $v = (v_0,\dots,v_{m-1})$. It is evident that the $J_n(e_0,e_1)$'s are recursive and in fact they are recursive uniformly on $e_0,e_1$ and $n$.

\begin{claim}\normalfont
\label{claim local separation}
For all $e_0$, $e_1$ with $G(e_0) \cap G(e_1) = \emptyset$, and all $n$ there exists a family $(C^{e_0,e_1}_{n,u,v})_{(u,v) \in J_n^{e_0,e_1}}$  of $\HYP$ subsets of $\om$ such that for all $(u,v) \in J_n^{e_0,e_1}$ the set $C^{e_0,e_1}_{n,u,v}$ separates $\{n\} \cap G_u(e_0)$ from $\{n\} \cap G_v(e_1)$.

Further there exists a recursive function 
\[
g: \om^3 \times \om^{< \om} \times \om^{< \om} \longrightarrow \om
\]
such that if $G(e_0) \cap G(e_1) = \emptyset$ then $g(e_0,e_1,n,u,v)$ is an $\hcode$-code for $C^{e_0,e_1}_{n,u,v}$ for all $(u,v) \in J_n^{e_0,e_1}$ and all $e_0$,$e_1$,$n$,$u$,$v$.
\end{claim}

\begin{proof}
First we explain how to define the separating sets and then how to get the coding function. For the moment we fix some naturals $e_0$, $e_1$ with $G(e_0) \cap G(e_1) = \emptyset$ and we suppress the symbols $e_0$, $e_1$ in the definition of the separating sets and the trees $J_n^{e_0,e_1}$.

For all $n$ the tree $J_n$ is \emph{well-founded}, \ie it has no infinite branch. Else there would be $\alpha$ and $\beta$ in the Baire space such that $\cn{(e_0,n)}{\alpha} \in [T]$, $\cn{(e_1,n)}{\beta} \in [T]$; so would have $n \in G(e_0) \cap G(e_1)$, a contradiction.

Further we fix some $n$. The definition of the family $(C_{n,u,v})_{(u,v) \in J_n}$ is done using backwards induction on the well-founded tree $J_n$. In the process we also define auxiliary sets $D^{i,j}_{n,u,v}$ for $i,j \in \om$.\smallskip

\emph{Induction.} Assume that $C_{n,u',v'}$ has been defined and separates $\{n\} \cap G_{u'}(e_0)$ from $\{n\} \cap G_{v'}(e_1)$ for all $(u', v') \in J_n$ extending properly $(u,v) \in J_n$. We do the same for $(u,v)$. Given $i,j \in \om$ we have three cases.\smallskip

\emph{Case Ind1.} $\cn{(e_0,n)}{\cn{u}{(i)}} \in T$ and $\cn{(e_1,n)}{\cn{v}{(j)}} \not \in T$. Then we define $D^{i,j}_{n,u,v} = \{n\}$.\smallskip 

\emph{Case Ind2.} $\cn{(e_0,n)}{\cn{u}{(i)}} \in T$ and $\cn{(e_1,n)}{\cn{v}{(j)}} \in T$. We take $u' = \cn{u}{(i)}$ and $v '= \cn{v}{(j)}$. Then the pair $(u',v')$ extends $(u,v)$ properly and belongs to $J_n$. So $C_{n,u',v'}$ has been defined and we take $D^{i,j}_{n,u,v} = C_{n,\cn{u}{(i)},\cn{v}{(j)}}$.\smallskip

\emph{Case Ind3.} $\cn{(e_0,n)}{\cn{u}{(i)}} \not \in T$. Then we take $D^{i,j}_{n,u,v} = \emptyset$.\smallskip

Having defined the $D^{i,j}_{n,u,v}$'s we take then
$
C_{n,u,v} = \textstyle \bigcup_{i \in \om} \bigcap_{j \in \om} D^{i,j}_{n,u,v}.\smallskip
$

We show that $C_{n,u,v}$ separates $\{n\} \cap G_u(e_0)$ from $\{n\} \cap G_v(e_1)$. The latter means that if $n \in G_u(e_0)$ then $n \in C_{n,u,v}$, and if $n \in G_v(e_1)$ then $n \not \in C_{n,u,v}$. For the former, assume that $n \in G_u(e_0)$, then there is $\gamma \in \baire$ such that $u \sqsubseteq \gamma$ and $\cn{(e_0,n)}{\gamma} \in [T]$. We take $i = \gamma(\lh(u))$, \ie $i$ is the first element of $\gamma$ after $u$. In particular $\cn{(e_0,n)}{\cn{(u_0,\dots,u_{\lh(u)-1})}{(i)}} \in T$; then for all $j$ we are either in Case Ind1 or in Case Ind2. In the former it is obvious that $n \in D^{i,j}_{n,u,v} = \{n\}$; in the latter we use the induction hypothesis that $C_{n,\cn{u}{(i)},\cn{v}{(j)}}$ separates $\{n\} \cap G_{\cn{u}{(i)}}(e_0)$ from $G_{\cn{v}{(j)}}(e_1)$. Since $n \in G_{\cn{u}{(i)}}(e_0)$ we have in particular that $n \in C_{n,\cn{u}{(i)},\cn{v}{(j)}} = D^{i,j}_{n,u,v}$. For the second assertion of the separation, we assume that $n \in G_v(e_1)$ and we take $\delta \in \baire$ such that $v \sqsubseteq \delta$ and $\cn{(e_1,n)}{\delta} \in [T]$. We consider some $i \in \om$; then for $j = \delta(\lh(v))$ we have $\cn{(e_1,n)}{\cn{v}{(j)}} = \cn{(e_1,n)}{(v_0,\dots,v_{\lh(v)-1}, \delta(\lh(v)))} \in T$, in particular we are not in Case Ind1. If we are in Case Ind3 then $ n \not \in D^{i,j}_{n,u,v} = \emptyset$. In Case Ind2 the set $C_{n,\cn{u}{(i)},\cn{v}{(j)}} = D^{i,j}_{n,u,v}$ separates $\{n\} \cap G_{\cn{u}{(i)}}(e_0)$ from $\{n\} \cap G_{\cn{v}{(j)}}(e_1)$. Since $n \in G_{\cn{v}{(j)}}(e_1)$ we have in particular that $n \not \in D^{i,j}_{n,u,v}$.\bigskip

The above shows how to define the separating sets $C^{e_0,e_1}_{n,u,v}$, for $(u,v) \in J_n^{e_0,e_1}$, and all $e_0$,$e_1$,$n$ for which $G(e_0) \cap G(e_1) = \emptyset$. Next we show how to obtain the recursive function $g$. We denote by $D^{e_0,e_1,i,j}_{n,u,v}$ the auxiliary sets $D^{i,j}_{n,u,v}$, which are defined at the instance $e_0$, $e_1$. It is evident from the previous definitions that the separating and the auxiliary sets satisfy the following conditions for all $e_0$, $e_1$, $n$ with $G(e_0) \cap G(e_1) = \emptyset$,
\begin{align}
\label{equation properties of separating A}
&\big(\text{$\cn{(e_0,n)}{\cn{u}{(i)}} \in T$ and $\cn{(e_1,n)}{\cn{v}{(j)}} \not \in T$}\big) \ \Longrightarrow \ D^{e_0,e_1,i,j}_{n,u,v} = \{n\}\\
\label{equation properties of separating B}
&\big(\text{$\cn{(e_0,n)}{\cn{u}{(i)}} \in T$ and $\cn{(e_1,n)}{\cn{v}{(j)}} \in T$}\big) \ \Longrightarrow \ D^{e_0,e_1,i,j}_{n,u,v} = C^{e_0,e_1}_{n,\cn{u}{(i)},\cn{v}{(j)}}\\
\label{equation properties of separating C}
&\big(\text{$\cn{(e_0,n)}{\cn{u}{(i)}} \not \in T$}\big) \ \Longrightarrow \ D^{e_0,e_1,i,j}_{n,u,v} = \emptyset\\
\label{equation properties of separating D}
& \hspace*{40mm} C^{e_0,e_1}_{n,u,v} = \textstyle \bigcup_{i \in \om} \bigcap_{j \in \om} D^{e_0,e_1,i,j}_{n,u,v}.
\end{align}
 
Let us denote by $P_1(e_0,e_1,n,u,v,i,j)$, $P_2(e_0,e_1,n,u,v,i,j)$,  and $P_3(e_0,n,u,i)$ the conditions in the premise of (\ref{equation properties of separating A}), (\ref{equation properties of separating B}), and (\ref{equation properties of separating C}) respectively, e.g.~ $P_3(e_0,n,u,i)$ means that $\cn{(e_0,n)}{\cn{u}{(i)}} \not \in T$. Obviously these are recursive and for all $e_0$,$e_1$,$n$,$u$,$v$,$i$,$j$ exactly one of $P_1$, $P_2$, $P_3$ holds, regardless of $G(e_0) \cap G(e_1)$.

Recall the function $u_1: \om \to \om$ such that $u_1(n)$ is an $\hcode$-code for the singleton $\{n\}$, and the $\hcode$-code $e_\emptyset$, which is an $\hcode$-code for the empty set.\smallskip

We define the function $\varphi: \om^4 \times \om^{< \om} \times \om^{<\om} \times \om \longrightarrow \om$ as follows:
\begin{align*}
\varphi(e,e_0,e_1,n,u,v,\pair{i,j}) =
\begin{cases}
u_1(n), & \ P_1(e_0,e_1,n,u,v,i,j),\\
\pair{1,S(e,e_0,e_1,n,\cn{u}{(i)},\cn{v}{(j)})}, &  \ P_2(e_0,e_1,n,u,v,i,j),\\
e_\emptyset, & \ P_3(e_0,n,u,i),
\end{cases}
\end{align*}
where $S: \om^4 \times \om^{<\om} \times \om^{<\om} \to \om$ is the function of the $s$-$m$-$n$ Theorem. (When the last argument of $\varphi$ is not of the form $\pair{i,j}$ we just assign $0$.)

Obviously $\varphi$ is recursive and from the Recursion Theorem there is some $e^\ast$ such that
\[
\varphi(e^\ast,e_0,e_1,n,u,v,\pair{i,j}) = \rfn{e^\ast}(e_0,e_1,n,u,v,\pair{i,j}) = \rfn{S(e^\ast,e_0,e_1,n,u,v)}(\pair{i,j})
\]
for all $e_0$,$e_1$,$n$,$u$,$v$,$i$,$j$.\smallskip

We show that 
\begin{align}
\label{equation phi is an I code}
\text{if $G(e_0) \cap G(e_1) = \emptyset$ then $\varphi(e^\ast,e_0,e_1,n,u,v,\pair{i,j})$ is an $\hcode$-code for $D^{e_0,e_1,i,j}_{n,u,v}$,}
\end{align}
for all $n,u,v,i,j$. If we do this then we will have that $\rfn{S(e^\ast,e_0,e_1,n,u,v)}(\pair{i,j})$ is an $\hcode$-code for $D^{e_0,e_1,i,j}_{n,u,v}$ and hence that $\pair{1,S(e^\ast,e_0,e_1,n,u,v)}$ is an $\hcode$-code for 
\[
\textstyle \bigcup_i \bigcap_j D^{e_0,e_1,i,j}_{n,u,v} = C^{e_0,e_1}_{n,u,v},
\]
where in the last equality we used (\ref{equation properties of separating D}). Hence the required function $g$ is given by
\[
g(e_0,e_1,n,u,v) = \pair{1,S(e^\ast,e_0,e_1,n,u,v)}.
\]

Notice that $g$ is recursive and total; in particular we do not need to assume that the sets $G(e_0)$ and $G(e_1)$ are disjoint.

To prove (\ref{equation phi is an I code}) we consider  $e_0$,$e_1$,$n$ with $G(e_0) \cap G(e_1) = \emptyset$ and we show by backwards induction on the well-founded tree $J^{e_0,e_1}_n$ that $\varphi(e^\ast,e_0,e_1,n,u,v,\pair{i,j})$ is an $\hcode$-code for $D^{e_0,e_1,i,j}_{n,u,v}$.

Suppose that our assertion is true for all $(u',v') \in J^{e_0,e_1}_n$ that extend a given $(u,v) \in J^{e_0,e_1}_n$ properly. We show the same for $(u,v)$. We consider $i,j \in \om$. If one of $P_1(e_0,e_1,n,u,v,i,j)$, $P_3(e_0,n,u,i)$ holds, then $\varphi(e^\ast,e_0,e_1,n,u,v,\pair{i,j})$ is an $\hcode$-code for $D^{e_0,e_1,i,j}_{n,u,v}$ according to  (\ref{equation properties of separating A}) and  (\ref{equation properties of separating C}). So we assume that $P_2(e_0,e_1,n,u,v,i,j)$ holds.

We take $u' = \cn{u}{(i)}$ and $v' = \cn{v}{(j)}$ and we have that $(u',v') \in J^{e_0,e_1}_n$; further the pair $(u',v')$ extends $(u,v)$ properly. From the induction hypothesis we have for all $i', j'$ that
$
\varphi(e^\ast,e_0,e_1,n,u',v',\pair{i',j'})
$
is an $\hcode$-code for $D^{e_0,e_1,i',j'}_{n,u',v'}$. Since for all $i', j'$,
\[
\rfn{S(e^\ast,e_0,e_1,n,u',v')}(\pair{i',j'}) = \varphi(e^\ast,e_0,e_1,n,u',v',\pair{i',j'})
\]
we have that $\pair{1,S(e^\ast,e_0,e_1,n,u',v')}$ is an $\hcode$-code for the set
\[
\textstyle \bigcup_{i'} \bigcap_{j'} D^{e_0,e_1,i',j'}_{n,u',v'} = C^{e_0,e_1}_{n,u',v'} = C^{e_0,e_1}_{n,\cn{u}{(i)},\cn{v}{(j)}} = D^{e_0,e_1,i,j}_{n,u,v},
\]
where in the first and the last of the above equalities we used (\ref{equation properties of separating D}) and (\ref{equation properties of separating B}) respectively and also the fact that $P_2(e_0,e_1,n,u,v,i,j)$ holds.

Moreover from the definition of $\varphi$,
\[
\pair{1,S(e^\ast,e_0,e_1,n,u',v')} = \pair{1,S(e^\ast,e_0,e_1,n,\cn{u}{(i)},\cn{v}{(j)})} = \varphi(e^\ast,e_0,e_1,n,u,v,\pair{i,j}).
\]

Hence $\varphi(e^\ast,e_0,e_1,n,u,v,\pair{i,j})$ is an $\hcode$-code for $D^{e_0,e_1,i,j}_{n,u,v}$ and the inductive step is complete. This concludes the proof of the claim.
\end{proof}

Going back to the proof of Proposition \ref{proposition separation} we fix for the moment $e_0$, $e_1$ with $G(e_0) \cap G(e_1) = \emptyset$ and we consider the sets $C^{e_0,e_1}_{n,u,v}$ for all $n$ and all $(u,v) \in J_n^{e_o,e_1}$, as in Claim \ref{claim local separation}. We define the set $C^{e_0,e_1}_\emptyset\subseteq \om$ as follows,
\begin{align*}
n \in C^{e_0,e_1}_\emptyset \iff \exists i \ \big [ (e_0,n,i) \in T \ \& \ \forall j \ \big ( (e_1,n,j) \in T \ \longrightarrow \ n \in C^{e_0,e_1}_{n,(i),(j)} \big ) \big ].
\end{align*}

Then it is easy to see that $C^{e_0,e_1}_\emptyset$ separates $G(e_0)$ from $G(e_1)$.\smallskip

To finish the proof we need to define the recursive uniformity function $u: \om \times \om \to \om$. First we notice that for the fixed $e_0$, $e_1$ it holds
\begin{align*}
n \in C^{e_0,e_1}_\emptyset 
\iff& \  \exists k \exists i \forall j \ \big[ (e_0,n,i) \in T \ \& \big( (e_1,n,j) \not \in T \ \vee \ n \in C^{e_0,e_1}_{k,(i),(j)} \cap \{k\} \big) \big]
\end{align*}

It follows that there are recursive conditions $R_1(e_0,e_1,i,j,n)$ and $R_2(e_0,i,n)$ such that
\begin{align*}
n \in C^{e_0,e_1}_\emptyset 
\iff& \ \exists k \exists i \forall j \ \big [  (R_1(e_0,e_1,i,j,n) \vee (R_2(e_0,i,n) \ \& \ n \in C^{e_0,e_1}_{k,(i),(j)} \cap \{k\} )  \big].
\end{align*}

We relax $e_0$, $e_1$; it is clear from the preceding that for all $e_0$, $e_1$, $n$ with $G(e_0) \cap G(e_1) = \emptyset$ it holds
\begin{align}
\label{equation properties of separating E}
C^{e_0,e_1}_\emptyset = \textstyle \bigcup_{k,i}\bigcap_j   \big [ A_{e_0,e_1,i,j} \ \cup \ (B_{e_0,i} \cap C^{e_0,e_1}_{k,(i),(j)} \cap \{k\}) \big ],
\end{align}
where $A_{e_0,e_1,i,j}$ is the set of all $n$ for which $R_1(e_0,e_1,i,j,n)$ holds, \ie it is the 
$(e_0,e_1,i,j)$-section of $R_1$ (viewing the latter as a set), and similarly $B_{e_0,i}$ is the $(e_0,i)$-section of $R_2$.

We consider the functions $u_{R_1}$ and $u_{R_2}$, which are obtained from Claim \ref{claim auxiliary functions for uniformity} for $P = R_1, R_2$. Further we employ the functions $u_\vee$, $u_\wedge$ from the latter claim, the function $g$ from Claim \ref{claim local separation}, as well as the preceding $u_1$.

Following (\ref{equation properties of separating E}) we define the recursive functions $h_1,h_2,h_3,h_4: \om^5 \to \om:$
\begin{align*}
h_1(e_0,e_1,k,i,j) =& \ g(e_0,e_1,k,(i),(j))\\
h_2(e_0,e_1,k,i,j) =& \ u_\wedge(h_1(e_0,e_1,k,i,j),u_1(k))\\
h_3(e_0,e_1,k,i,j) =& \ u_\wedge(u_{R_2}(e_0,i),h_2(e_0,e_1,k,i,j))\\
h_4(e_0,e_1,k,i,j) =& \ u_\vee(u_{R_1}(e_0,e_1,i,j),h_3(e_0,e_1,k,i,j)).
\end{align*}

It is then clear from (\ref{equation properties of separating E}) and the preceding definitions that
\begin{align}
\label{equation properties of separating F}
G(e_0) \cap G(e_1) = \emptyset \ \ \Longrightarrow \ \ C^{e_0,e_1}_\emptyset = \textstyle \bigcup_{k,i}\bigcap_j \hset{h_4(e_0,e_1,k,i,j)}.
\end{align}

Next we consider some $e^\ast$ such that 
\[
\rfn{e^\ast}(e_0,e_1,\pair{\pair{k,i},j}) = h_4(e_0,e_1,k,i,j)
\]
We may assume that $\rfn{e^\ast}(e_0,e_1,\pair{t,j}) = e_\emptyset$ for all $j$ and all $t$, which are not of the form $\pair{k,i}$.

So for all $e_0$, $e_1$ with $G(e_0) \cap G(e_1) = \emptyset$ we have from (\ref{equation properties of separating F}),
\begin{align*}
C^{e_0,e_1}_\emptyset 
=& \ \textstyle \bigcup_{k,i}\bigcap_j \hset{h_4(e_0,e_1,k,i,j)}\\
=& \  \textstyle\bigcup_{k,i}\bigcap_j \hset{\rfn{e^\ast}(e_0,e_1,\pair{\pair{k,i},j})}\\
=& \ \textstyle \bigcup_{t}\bigcap_j \hset{\rfn{e^\ast}(e_0,e_1,\pair{t,j})}\\
& \hspace*{10mm} \text{(since for $t \neq\pair{i,k}$ we get the empty set)}\\
=& \ \textstyle \bigcup_{t}\bigcap_j \hset{\rfn{S(e^\ast,e_0,e_1)}(\pair{t,j})},
\end{align*}
where $S$ is as in the $s$-$m$-$n$ Theorem. 

Finally we take
\[
u: \om \times \om \to \om: u(e_0,e_1) = \pair{1,S(e^\ast,e_0,e_1)}.
\]
Clearly $u$ is recursive and total, and from the preceding analysis it is evident that $u(e_0,e_1)$ is an $\hcode$-code for $C^{e_0,e_1}_\emptyset$ when $G(e_0) \cap G(e_1) =\emptyset$. This completes the proof.

\end{document}